\numberwithin{equation}{section}
\newcommand{\R}{\mathbb R}
\newcommand{\C}{\mathbb C}
\def\eps{\varepsilon}
 \newtheorem{thm}{Theorem} 
 \newtheorem{lem}{Lemma}
  \newdefinition{rmk}{Remark}
\newtheorem{definition}{Definition}
\newtheorem{problem}{Problem}
\newtheorem{corollary}{Corollary}
\newtheorem{example}{Example}
\numberwithin{thm}{section}
\numberwithin{lem}{section}
\numberwithin{rmk}{section}
\numberwithin{definition}{section}
\numberwithin{problem}{section}
\numberwithin{corollary}{section}
\numberwithin{example}{section}
\begin{document}
\title{Generalized algorithms for the  approximate matrix polynomial GCD of reducing data uncertainties with application to MIMO system and control}

 \author[1,2]{Antonio Fazzi\corref{cor1}}
 \ead{Antonio.Fazzi@vub.be}
\author[1]{Nicola Guglielmi} \ead{nicola.guglielmi@gssi.it}
\author[2]{Ivan Markovsky} \ead{imarkovs@vub.ac.be}
\cortext[cor1]{Corresponding author}
\address[1]{Gran Sasso Science Institute (GSSI), Viale F. Crispi 7, 67100 L'Aquila, Italy}
\address[2]{Vrije Universiteit Brussel (VUB), Department ELEC, Pleinlaan 2, 1050 Brussels, Belgium}
\begin{abstract}
Computation of (approximate) polynomials common factors is an important problem in several fields of science, like control theory  and signal processing. While  the problem has been widely studied for scalar polynomials, the scientific literature in the framework of matrix polynomials seems to be  limited to the problem of exact greatest common divisor computation. In this paper, we  generalize two algorithms from scalar to matrix polynomials.  The first  one is fast and simple.  The second one is more accurate but computationally  more expensive. We test the performances of the two algorithms and observe similar behavior  to the one in the scalar case.  Finally we describe an application to multi-input multi-output linear time-invariant dynamical  systems.
\end{abstract}

\begin{keyword}
Matrix polynomials \sep Approximate common factor \sep Subspace method \sep Matrix ODEs
\end{keyword}

\maketitle

\section{Introduction}
Polynomials common factors  computation is an important problem in several scientific fields due to its applications \cite{Markovsky2018}. In this paper we deal with common factors for matrix polynomials, which are matrices whose elements are polynomials, or equivalently polynomials with matrix coefficients.  Readers not familiar with matrix polynomials can refer for example  to \cite{Gohberg2009,Rosenwasser2006}.

The computation of a Greatest Common Divisor (GCD) $C(\lambda)$  of two  matrix polynomials $A(\lambda)$ and $B(\lambda)$ appears in several problems  in multivariable control  \cite{Emre1980,ForneyJr.1975,Basilio1997}.
The problem has been studied by many authors and through different techniques. Some authors find the GCD as a  combination of polynomials \cite{MacDuffee1933} or transform the block matrix $[A(\lambda) \  B(\lambda)]$ into $[C(\lambda) \   \  0]$ \cite{Wolovich1974}. 
Other methods use  the generalized Sylvester matrix  \cite{anderson1976, Bitmead1978}. 

The most popular  references  study the properties of the resultant for matrix polynomials, e.g.  \cite{anderson1976, Gohberg2010, Gohberg2008, Kaashoek2010}, or they deal with exact common factor  computations  for matrix  polynomials  \cite{Basilio1997, Bitmead1978, Moness1982}.  
Anyway in some applications (see Section \ref{sec:applications}) it is needed to  compute  approximate common factors, due to measurement noise or other 
perturbations  on the data.

 The Approximate GCD problem has been extensively studied for scalar polynomials; but  in the framework of multivariable control systems we deal with matrix polynomials and, up to our knowledge, there is no algorithm for solving the problem in the matrix case. The goal of this paper is to generalize the algorithms proposed in \cite{Qiu1997} and in   \cite{Guglielmi2017, Fazzi2018} from scalar to matrix polynomials. 
 \textcolor{blue}{
 This paper is organized as follows:   Section \ref{sec:exactgcd}   relates to the exact GCD computation in the matrix case, and the properties of the generalized resultant;   Section \ref{sec:ss} generalizes the subspace method   (Algorithm \ref{alg:sub}) of  \cite{Qiu1997},  while  Section \ref{sec:ode}   generalizes the ODE-based method  (Algorithm \ref{alg:ODE}) of  \cite{Guglielmi2017, Fazzi2018}.   Section \ref{sec:numericaltest} shows the performance of the algorithms. Finally applications in the framework of linear time-invariant systems are considered  in Section \ref{sec:applications}.}

\paragraph{Notations}
\begin{itemize}
\item $A(\lambda)$, $B(\lambda)$ are two (square) coprime matrix  polynomials, $\hat{A}(\lambda), \hat{B}(\lambda)$ are  perturbations of $A(\lambda), B(\lambda)$ having a common factor (the outputs of the proposed algorithms).    They can be factored as  $\hat{A} = C \bar{A}$, $\hat{B} = C \bar{B}$; $C$ denotes the (monic) common factor;

\item $m$ is the dimension of the matrices $A, B$, $n$ is the degree of the polynomials (we assume they have the same degree), $d$ is the degree of the sought common factor;


\item $S_{\ell}$ denotes a structured Sylvester matrix whose dimensions depend on the parameter $\ell$ (see Section \ref{sec:sylvres}); $A \in \mathcal{S}$ means that the matrix $A$ has the Sylvester structure   and $P_{\mathcal{S}}(\cdot)$ is the  operator which orthogonally  project the argument onto the set $\mathcal{S}$;


\item we denote by  $\| \cdot \|_F$ the  Frobenius norm of a matrix induced by  the  Frobenius inner product $\langle A, B \rangle = \tr(A^{\top} B)$;

\item $\tau(C)$ denotes the Toeplitz matrix built from the coefficients of the matrix polynomial $C(z)$;

\item a dot on a function denotes its time  derivative (we deal with univariate functions only).
\end{itemize}
We restrict in the following to the case of two matrix polynomials and 
we assume  both the matrices $A, B$ to be square in order to simplify the notation. Anyway the proposed algorithms work  if one of the two matrices is rectangular
(as pointed out in Remark \ref{rmk:lefttorightswitch} we need only two matrices  having the same number of rows or columns) 
 and they could be extended to more than two polynomials.
 Throughout the paper we use without distinction the terms GCD and common factors. 

\section{Matrix polynomial GCD approximation}
\label{sec:exactgcd}
We analyze in this section how to approach the  common factors computation in the case of matrix polynomials, emphasizing the main differences with respect to the scalar case. 
The first difference arising when we consider matrices instead of scalars is the loss of commutativity. Henceforth, we need to distinguish between  right and left divisors. In the following  we  focus on left divisors but right  divisors have obvious counterparts. 

\begin{definition}[Left divisor of two matrix polynomials]
\label{def:gcld}
A  (exact) common left divisor  of two matrix polynomials  $A(\lambda)$ and $B(\lambda)$, having the same number of rows, is any matrix polynomial  $C(\lambda)$ such that
\begin{equation*}
\label{eq:rightdivisor}
A(\lambda) = C(\lambda) \bar{A}(\lambda)  \ \ \  \ \ B(\lambda) = C(\lambda) \bar{B}(\lambda) 
\end{equation*}
for some matrix  polynomials  $\bar{A}(\lambda)$, $\bar{B}(\lambda)$;
%
\end{definition} 

\begin{rmk}
\label{rmk:lefttorightswitch}
The definition of left (right) divisor is meaningful only in the case the two matrices have the same number of rows (columns). 
If we transpose the matrix polynomials, we can switch between left and right common factors.
\end{rmk}

In the framework of scalar polynomials, two common factors (or, in general, two polynomials) are equivalent up to a constant factor. A similar property holds  true in the matrix case: two matrix polynomials are equivalent up to multiplication with unimodular matrices.
\begin{definition}[Unimodular matrix polynomials]
	\label{def:unimodular}
Let $U(\lambda)$ be a square   matrix polynomial of dimension $m$. Then $U(\lambda)$ is  a unimodular matrix polynomial if there exists a $m \times m$  matrix polynomial  $V(\lambda)$ such that $V(\lambda) U(\lambda) = I$. Equivalently, if $\textrm{det} (U(\lambda))$ is a non-zero constant.
\end{definition}

\begin{definition}[Matrix polynomials equivalence]
\label{def:eqmat}
 Given two matrix polynomials $C_1(\lambda)$ and $C_2(\lambda)$, they are equivalent if and only if there exist  unimodular matrix polynomials $U(\lambda)$, $V(\lambda)$ such that $C_1(\lambda) = U(\lambda) C_2(\lambda) V(\lambda)$.
\end{definition}

The following statement is helpful to understand if a given matrix polynomial is unimodular: 
	$U(\lambda)$ is a unimodular matrix polynomial if and only if it is associated with  a finite sequence of   the following transformations:

\begin{enumerate}
\item interchange two columns: it is equivalent to  the multiplication with a permutation matrix;
\item multiply a column by a nonzero constant: it is equivalent to multiplication  with a constant diagonal matrix;
\item replace the i-th column $c_i(\lambda)$ by $c_i(\lambda) + \lambda^d c_j(\lambda)$: this is equivalent to the multiplication with a matrix polynomial  equal to the identity except for the presence of $\lambda^d$ in the position $(j, i)$;
\item all the previous transformations can be applied to the rows  and they correspond to a premultiplication with a  suitable unimodular matrix. 
\end{enumerate}

\begin{rmk}
	\label{rmk:fullrank}
The set of equivalent common factors, according to Definition \ref{def:eqmat} and the last statement, is big and sometimes it can be difficult to understand if two given matrix polynomials are equivalent even for small dimensions. In order to make this problem milder  we  restrict,  in the following, to the case of monic  common factors (there is some loss of generality since we restrict to the polynomials whose leading coefficient is full rank).
This assumption is not fundamental, though;  by removing it we can compute approximate common factors of given degree whose leading coefficient is not full rank.  
\end{rmk} 


\subsection{Sylvester matrices for matrix polynomials}
\label{sec:sylvres}
Let $A$ and $B$ be $m \times m$ matrix polynomials of degree $n$. Thus
\begin{equation*}
\label{pola}
A(\lambda ) = A_0 + A_1 \lambda + \cdots + A_n \lambda^n \ \text{with} \ A_n \ne 0,
\end{equation*}
\begin{equation*}
\label{polb}
B(\lambda) = B_0 + B_1 \lambda + \cdots + B_n \lambda^n \ \text{with} \ B_n \ne 0.
\end{equation*}
We assume $n > 0$, and that the leading matrix  coefficients $A_n$ and $B_n$ are invertible, so the determinants of $A(\lambda)$ and $B(\lambda)$  are not zero. 

A useful tool in testing polynomials coprimeness is the Sylvester resultant: its straightforward generalization to the matrix case is 
  the following $2mn \times 2mn$ structured  matrix
\begin{equation*}
\label{eq:resultant}
S(A, B) =\begin{pmatrix}
A_n & \cdots & \cdots & A_0 &  &  &  \\
       & A_n     & \cdots & \cdots & A_0 &  &  \\
      &            & \ddots &             &        & \ddots & \\
      &          &              & A_n    & \cdots & \cdots & A_0 \\
 B_n & \cdots & \cdots & B_0 &  &  &  \\
       & B_n     & \cdots & \cdots & B_0 &  &  \\
      &            & \ddots &             &        & \ddots & \\
      &          &              & B_n    & \cdots & \cdots & B_0 \\
\end{pmatrix}
\begin{array}{clc}
 & \rdelim\} {4} {4mm} & \\
 &                               & n \\
 &                          &  \\
 &                           & \\
  & \rdelim\} {4} {4mm} & \\
 &                               & n \\
 &                          &  \\
 &                           & \\
 \end{array}.
\end{equation*}
 In \cite{Gohberg2010} it has been shown that 
the key property for the classical Sylvester resultant does not carry over for matrix polynomials, in particular
\begin{equation}
\label{kergeqeigs}
\text{dim} \   \ker ( S(A, B)) \geq \nu (A,B),
\end{equation}
where $\nu(A, B)$ denotes the total common multiplicity of the common eigenvalues of $A$ and $B$.  Example \ref{ex:1} shows that the inequality \eqref{kergeqeigs} can be strict.
\textcolor{blue}{
\begin{example}
	\label{ex:1}
	Let the two $2 \times 2$ matrix polynomials of degree $1$, 
\begin{equation}
\label{ex:ineqholdstrue}
 A(\lambda) = \begin{pmatrix}
-1 + \lambda & 0 \\ 1 & -1 + \lambda
\end{pmatrix}, \ \ \ B(\lambda) = \begin{pmatrix}
 \lambda & 1 \\ 0 &  \lambda - 2
\end{pmatrix}. 
\end{equation} 
We deduce easily that $A(\lambda) = A_0 + A_1 \lambda$ and $B(\lambda) = B_0 + B_1 \lambda$ where
\begin{equation*}
A_0 = \begin{pmatrix}
-1 &0\\1&-1
\end{pmatrix}, \ A_1 = \begin{pmatrix}
1&0\\0&1
\end{pmatrix}, \ B_0 = \begin{pmatrix}
0&1\\0&-2
\end{pmatrix}, \ B_1=\begin{pmatrix}
1&0\\0&1
\end{pmatrix}.
\end{equation*}
We have
\begin{equation}
  \mathbf{S}(A, B) \begin{pmatrix}
1 \\ -2 \\ 1 \\ -1
\end{pmatrix} = \begin{pmatrix}
A_1 & A_0 \\ B_1 & B_0
\end{pmatrix}  \begin{pmatrix}
1 \\ -2 \\ 1 \\ -1
\end{pmatrix} =  \begin{pmatrix}
1 & 0 & -1 & 0 \\ 0 & 1& 1 &-1 \\ 1&0&0& 1 \\ 0 & 1 & 0 & -2 
\end{pmatrix} \begin{pmatrix}
1 \\ -2 \\ 1 \\ -1
\end{pmatrix} = \begin{pmatrix} 0\\0\\0\\0 \end{pmatrix},
\end{equation}
so the kernel of the resultant has dimension (at least) $1$, but $\textrm{det}(A(\lambda))$ and $\textrm{det}(B(\lambda))$ have no common zeros, hence the matrices have no common eigenvalues. 
\end{example}}
On the other hand, given $A(\lambda), B(\lambda)$ and $\lambda_0 \in \C$, if there exists  a vector $x_0 \neq 0$ such that  $A(\lambda_0) x_0 = 0$ and $B(\lambda_0) x_0 = 0$ then det$(A(\lambda_0))=0$ and det$(B(\lambda_0)) = 0$; but the contrary is not true. Consequently, the common factors are not associated only with   the common roots of the determinants of the matrix polynomials.

In order to get the equality in \eqref{kergeqeigs} we can consider a bigger Sylvester matrix \cite{Gohberg2010}. Defining the following  resultant
\begin{equation}
\label{eq:modifiedres}
S_{\ell}(A, B) = \begin{pmatrix}
A_n & \cdots & \cdots & A_0 &  &  &  \\
       & A_n     & \cdots & \cdots & A_0 &  &  \\
      &            & \ddots &             &        & \ddots & \\
      &          &              & A_n    & \cdots & \cdots & A_0 \\
 B_n & \cdots & \cdots & B_0 &  &  &  \\
       & B_n     & \cdots & \cdots & B_0 &  &  \\
      &            & \ddots &             &        & \ddots & \\
      &          &              & B_n    & \cdots & \cdots & B_0 \\
\end{pmatrix}
\begin{array}{l}
  \\[-10mm] \rdelim\}{4}{4mm}[$\ell-n$] \\  \\  \\[6mm]  \rdelim\}{4}{4mm}[$\ell-n$] \\ \\
\end{array}
\end{equation}
we have the equality in \eqref{kergeqeigs} if $\ell \geq n(m+1)$;  in the following we set $\ell = n(m+1)$.
\textcolor{blue}{
\begin{example}
	Using \eqref{ex:ineqholdstrue}, we deduce that
	\begin{equation*}
	S_3(A, B) = \begin{pmatrix}
	A_1 & A_0 & \\   & A_1 & A_0\\ B_1 & B_0 & \\   & B_1 &B_0
	\end{pmatrix} =  \begin{pmatrix}
1 & 0 & -1 & 0 & 0&0 \\
0 & 1 & 1 & -1 & 0 & 0 \\
0 & 0 & 1 & 0 & -1 & 0 \\
0 & 0 & 0 & 1 & 1 & -1 \\
1 & 0 & 0 & 1& 0 & 0  \\
0 & 1 & 0 & -2 & 0 & 0 \\
0 & 0 & 1 & 0 & 0 & 1 \\
0 & 0 & 0 & 1 & 0 & -2
\end{pmatrix}
\end{equation*}
is full rank.
\end{example}.}
\begin{rmk}
The definition of resultant in \eqref{eq:modifiedres} refers to 	right common factors. If we deal with left common factors we need to transpose it. 
\end{rmk}
\subsection{Common factor approximation}
In the past years several authors have proposed some algorithms for the computation of an exact GCD of matrix polynomials.  But in practical applications, the coefficients can be inexact due to several sources of error. 
Given coprime matrix polynomials, we are interested in computing the smallest perturbation which makes them having a common factor of given degree. 

Consider  two coprime matrix polynomials $A(\lambda)$ and $B(\lambda)$. The problem is to compute a closest pair  of matrix polynomials $\hat{A}(\lambda)$, $\hat{B}(\lambda)$ which has a non trivial  (exact) common factor of specified degree $d$. Such a common factor is called an approximate common factor for the matrices $A(\lambda)$ and B($\lambda$).
In the following we assume that the coefficient matrices are real. The distance between two pairs of matrix polynomials is defined as follows:

\begin{equation}
\label{eq:dist}
\textrm{dist}(\{A, B\}, \{\hat{A}, \hat{B}\}) = \sqrt{\sum_{j=0}^n \lVert A_j - \hat{A}_j \rVert_F^2 + \sum_{j=0}^{n} \lVert B_j - \hat{B}_j \rVert_F^2}
\end{equation}
where $A_{j}$ and $B_j$ denote the $j$-th (matrix) coefficient of the corresponding matrix  polynomial.
The formulation of the   problem   is the following:
\textcolor{blue}{
\begin{problem}{Approximate left common factor ptoblem for matrix polynomials}
\label{prob:agcd}
Given two left coprime matrix polynomials  $A=A(\lambda)$ and  $B=B(\lambda)$, a number $d \in \mathbf{N}$, compute
$$
\inf_{\substack{\{\hat{A}, \hat{B}\}: \exists C \ \text{such that} \\ \hat{A}= C \bar{A}, \hat{B} = C \bar{B} \\ C \ \text{has  degree}\  d}} \textrm{dist}(\{A, B\}, \{\hat{A}, \hat{B}\})
$$
where $A,B$ denote (with an abuse of notation) a matrix collecting the coefficients of the corresponding matrix polynomial, while  the distance is the one defined in \eqref{eq:dist}. The left  common factor $C$  is an approximate common factor for the matrix polynomials $A$ and $B$. The problem involving approximate  right common factor is analogous.  
\end{problem}}

In the following sections we propose two algorithms for solving the nonconvex optimization Problem \ref{prob:agcd} by local optimization approaches. To the best of our knowledge there is no algorithm in the  literature to compute its solution. Our proposals come from the generalization of two algorithms proposed in the scalar case: the subspace method \cite{Qiu1997} and an ODE-based algorithm \cite{Fazzi2018}. We list for each algorithm the main points and properties, and we test their performance on some numerical examples.


\section{Generalized subspace method for matrix polynomials}
\label{sec:ss}
In this section we describe how we generalize the subspace method \cite{Qiu1997} to the computation of  approximate common factors of matrix polynomials.
The original algorithm for scalar polynomials is a powerful tool  in the framework of GCD computation since it is \textit{simple to develop, easy to understand and convenient to implement}. Moreover it is one of the first algorithms capable of dealing with noise-corrupted data. However, as shown in \cite{Fazzi2018}, the performance of the subspace method can be improved  in terms of accuracy of the solution by other optimization methods.
The basic idea of the algorithm is the fact that the information on the (approximate) common factors of a set of polynomials is in the  null space of the associated resultant. 

We briefly recall how the algorithm works for scalar polynomials, as described in \cite{Qiu1997}:
\begin{enumerate}
\item Build $S$, the Sylvester matrix  of dimension $N (n+1) \times (2n+1)$ associated with the given data polynomials, where  $N$ is the number of polynomials and $n$ is the degree of the polynomials.
\item \begin{enumerate}[label=\alph*]
\item Compute 
$$ V_0=(V_{d}, \dots, V_{1})
, $$ 
the  null space of $S$  ($d$ is the degree of the sought GCD). $V_0$ has $d$ columns.
\item In order to extract the information about the GCD, reshape each column of $V_0$ into a Hankel matrix with $r=2n+1-d$ rows:
\begin{equation*}
H_i = \begin{pmatrix}
V_i (1) & V_i(2) & \cdots & V_i(d+1) \\
V_i(2) & V_i(3) & \cdots & V_i(d+2) \\
\cdots & \cdots & \cdots & \cdots \\
V_i(r) & V_i(r+1) & \cdots & V_i(2n+1)
\end{pmatrix} \ \ \ i=r+1, \dots, 2n+1.
\end{equation*}
\end{enumerate}
\item Build the matrix 
$$
R= \sum_{\substack{i=d \\  i=i-1}}^{1} H_i^T H_i
$$
 and extract the GCD by the eigenvector of $R$ corresponding to the smallest eigenvalue. The entries of such eigenvector are the coefficients of the common factor. 
\end{enumerate}

To generalize the method for matrix polynomials, we replace the scalar coefficients by matrices of dimension $m$, manipulating and reshaping the data in a suitable way. 
Similarly to the scalar case, the  algorithm works in the same way both in the computation of exact common factors or approximate common factors. This leads to high computational speed but less accurate solutions. 
 The main points of the algorithm are summarized in Algorithm \ref{alg:sub}.
 \begin{algorithm2e}[H]
 	\caption{Subspace method for the computation of (approximate) common factors of matrix polynomials}
 	\label{alg:sub}
 	\DontPrintSemicolon
 	\KwData{$A, B$ ($m \times m$ matrix polynomials of degree $n$), $d$ (degree common factor).}
 	\KwResult{$\hat{C}$ (common factor),}

 	\Begin{
 		\nl Build the structured Sylvester matrix $S_{n(m+1)}$ as in \eqref{eq:modifiedres} \;
 		\nl Compute the null space of $S_{n(m+1)}$
 		$$
 		V_0=(V_{md}, \dots, V_1),
 		$$  \;
 		\nl \For{i=md, \dots, 1}{
 		Reshape each vector of $V_0$ into a matrix with $m$ rows 
 		\begin{equation*}
 		\bar{V}_i=\begin{pmatrix}
 		V_{i}(1) &  \cdots  &  V_i(mn(1+m) - m +1)  \\
 		\vdots      &           &      \vdots      \\
 		V_{i}(m) &    \cdots  & V_i(mn(1+m))
 		\end{pmatrix}  
 		\end{equation*}  \;
 		Build a   block Hankel matrix $H(V_i)$ having $m(d+1)$ rows  starting from the columns of $\bar{V}_i$ 
%
 	}  \;
 		\nl Stack    the matrices $H(V_i)$  in a row 
 		\begin{equation}
 		\label{eq:matk}
 		\mathcal{K} = [H(V_{md}), \dots, H(V_{1})]
 		\end{equation}
 		and compute $u_{m}, \dots, u_{1}$, the  left singular vectors of $\mathcal{K}$ associated with its $m$ smallest singular values   \;
 		
 	}
 \end{algorithm2e}

The following theorem shows how the proposed algorithm works.
\begin{thm}
      If the matrix $\mathcal{K}$ \eqref{eq:matk} is rank deficient, the subspace method computes a common factor between the  data matrix polynomials. Otherwise, it computes an approximate common factor.  
\end{thm}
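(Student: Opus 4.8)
The plan is to reduce the theorem to a single algebraic correspondence: a degree-$d$ matrix polynomial $C$ is a common factor of $A$ and $B$ exactly when the coefficient array of $C$ lies in the left null space of $\mathcal{K}$ in \eqref{eq:matk}; both assertions then follow from the variational characterization of the smallest singular values. To set this up I would first describe $\ker S_{n(m+1)}$. Taking $\ell = n(m+1)$ so that equality holds in \eqref{kergeqeigs}, I would show that whenever $A, B$ admit a common factor $C$ of degree $d$ (so that, in the notation of the paper, $A = C\bar A$ and $B = C\bar B$), the kernel has dimension exactly $md$---consistent with the $md$ columns $V_{md}, \dots, V_1$ produced in Step 2 of Algorithm \ref{alg:sub}. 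Concretely, the kernel vectors correspond to the vector-polynomial combinations annihilating $[A\ B]$, and after the reshaping into the arrays $\bar V_i$ their entries recover the cofactor data of $\bar A, \bar B$, up to a free $m$-vector multiplier of degree $d-1$ whose $md$ scalar parameters match the kernel dimension.

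The heart of the argument is the identity $u^\top \mathcal{K} = 0$, where $u$ collects the coefficient matrices $C_0, \dots, C_d$ of a common factor. The block-Hankel matrices $H(V_i)$ encode, column by column, the cofactor data $\bar A, \bar B$ stored in the kernel via the convolution action $\tau(\cdot)$; left multiplication of $\mathcal{K}$ by the coefficients of $C$ contracts this data against $C$, and the factorization $A = C\bar A$, $B = C\bar B$ forces the result to vanish. This turns the statement ``$C$ divides both $A$ and $B$'' into ``$u$ annihilates $\mathcal{K}$ from the left.'' A dimension count shows the left null space has dimension $m$ in the exact case, matching both the $m$ singular vectors $u_m, \dots, u_1$ extracted in Step 4 and the $m^2(d+1)$ free entries of an $m\times m$ degree-$d$ factor.

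The two cases now follow from the SVD of $\mathcal{K}$. If $\mathcal{K}$ is rank deficient---its $m$ smallest singular values vanish---the vectors $u_m, \dots, u_1$ span the left null space, so by the correspondence they reassemble the coefficients of an exact common factor $\hat C$, and unreshaping gives $A = \hat C\bar A$, $B = \hat C\bar B$. If instead $\mathcal{K}$ has full column rank, no exact factor of degree $d$ exists, and the left singular vectors for the $m$ smallest singular values minimize $\sum_i \lVert u^\top H(V_i)\rVert_F^2$ over orthonormal $u$; by the correspondence this least-squares minimizer of the divisibility residual yields an approximate common factor in the sense of Problem \ref{prob:agcd}.

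The main obstacle is the bookkeeping behind the central identity. Matching the block-Hankel indexing ($m(d+1)$ rows, columns drawn from the $m \times n(m+1)$ arrays $\bar V_i$) against the Toeplitz action $\tau(\cdot)$, so that left multiplication by $C$ aligns exactly with the shifts distributed across the $md$ stacked blocks---while verifying that no boundary coefficients survive and that the kernel dimension is neither over- nor under-counted---is delicate. Because commutativity is lost, the left/right placement of $C$ relative to the cofactors must be tracked throughout, and the right-factor resultant \eqref{eq:modifiedres} has to be transposed when left factors are sought.
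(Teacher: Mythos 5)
Your proposal is correct and follows essentially the same route as the paper's proof: both reduce the theorem to the correspondence between the coefficients of a degree-$d$ common factor and the left null space of $\mathcal{K}$, obtained by passing from the null space of the resultant $S_{n(m+1)}$ through the Toeplitz--Hankel duality ($\tau(C)V_0=0$ becoming $C\,H(V_i)=0$), and then reading off exact versus approximate factors from the smallest singular values. The only differences are cosmetic: you derive the central identity by direct convolution bookkeeping and add the kernel-dimension count and the variational reading of the noisy case, whereas the paper gets $\tau(C)V_0=0$ from the factorization $S_\ell(A,B)=S_\ell(\bar A,\bar B)\tau(C)$ and dismisses the approximate case as algorithmically identical.
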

\begin{proof}
	We show the result about the computation of exact common factors only; the if statement follows from the possible presence of noise but the algorithm is exactly the same. 

   In the case $A(z)$ and $B(z)$ have a (right) common factor $C(z)$, the resultant $S_{\ell}(A, B)$ can be split as $S_{\ell} (\bar{A}, \bar{B}) \tau(C)$. Moreover, we know the resultant $S_{\ell}(A, B)$ has a non-trivial kernel (see Section \ref{sec:sylvres}) so we can write the following 
   SVD factorization 
   $$S_{\ell}(A, B) = \begin{pmatrix}
   U_r &  U_0 \end{pmatrix} \begin{pmatrix}
   \Sigma_r & 0 \\ 0 & 0
   \end{pmatrix} \begin{pmatrix}
   V_r^{\top} \\V_0^{\top}
   \end{pmatrix},$$
   where $U_r, \Sigma_r, V_r$ correspond to the non-zero singular values/vectors.  We notice that the rows of $\tau(C)$ and the rows of $V_r^{\top}$ span the same subspace. Then, because of the orthogonality between  $V_r$ and $V_0$, the following equality holds true
   \begin{equation}
   \label{eq:v0}
   \tau(C) V_0 = 0. 
   \end{equation}
   Equation \eqref{eq:v0} has a unique solution for the common factor $C$ (up to multiplication by unimodular matrices, see Definition \ref{def:unimodular}) \cite{LiuXu}.
   Equation \eqref{eq:v0} can be written as
  \begin{equation}
  \label{eq:taucv}
  \sum_{\substack{i = md \\  i=i-1}}^{1} \| \tau(C) V_i \|^2 = 0 \ \ \ \ V_i \in V_0.
  \end{equation}
  Exploiting the Toeplitz structure of $\tau(C)$ the equation \eqref{eq:taucv} can be written as
  $$
  \sum_{\substack{i = md \\  i=i-1}}^{1}  \| C H(V_i)  \|^2 = 0    \ \ \ \ V_i \in V_0,
  $$
  where $C$ is a matrix collecting the  coefficients of the common factor (with an abuse of notation we use the same letter $C$), while $H(V_i) $ is a mosaic  Hankel matrix built from the entries of the vector $V_i$.
   Hence the entries of the matrix $C$, i.e. the coefficients of the sought common factor, can be recovered from the left null space of the matrix \eqref{eq:matk}  $\mathcal{K} = [H(V_{md}), \dots, H(V_{1})]$.

\end{proof}

\begin{rmk}
\label{rmk:fromgcdtopol}
Given the matrices $A(z)$ and $B(z)$, the subspace method computes only a (approximate) common factor $C(z)$ but not the  polynomials $\hat{A}(z), \hat{B}(z)$ having $C(z)$ as common factor. To compute these polynomials we need to solve the least squares problem 
\begin{equation*}
\label{eq:lsproblem}
\min_{\hat{A}, \hat{B}} \| A - \hat{A} \|_2^2 + \| B - \hat{B} \|_2^2 = 
  \min_{\bar{A}, \bar{B}} \| A - C \bar{A} \|_2^2 + \| B - C \bar{B} \|_2^2
\end{equation*}
where $C$ is the common factor computed by the algorithm.
\end{rmk}

	\begin{rmk}
(Computational cost) The advantage of this subspace method is to be very fast and cheap. The main computational cost consists in two SVDs.
\end{rmk}

\begin{rmk}
	The proposed algorithm computes a (exact) common factor between $A(z)$ and $B(z)$ whenever it exists. If the data do not admit a common factor, the algorithm automatically computes an approximate common factor, but there are no differences from the computational point of view.  
	\end{rmk}

\section{Generalized ODE-based method for matrix polynomials}
\label{sec:ode}
The goal of  this section is to generalize the algorithm proposed in \cite{Fazzi2018} for scalar polynomials,  to the case of matrix polynomials.  Even if some of the results stated in this section may look small variations of the one proposed in \cite{Fazzi2018, Guglielmi2017}, we remark that there are no algorithms in the literature which solve the considered  problem. Moreover, by removing the assumption in Remark \ref{rmk:fullrank}, we can change the objective functional in order to compute   approximate common factors   whose leading coefficient is rank deficient.  
A further difference with respect to the case of scalar polynomials is the computational strategy in  the outer iteration.

\subsection{General aspects}
We describe first some useful tools and ideas to understand how the algorithm works.  
When we deal with coprimeness of  matrix polynomials, just as it happens for the scalar case, the Sylvester resultant  is a useful tool. We showed in Section \ref{sec:sylvres} that, replacing the scalar coefficients by matrices, we do not have anymore the equality  between the corank (the dimension of the kernel) of the resultant and the degree of the common factor between the polynomials, as it happens in the scalar case \cite{sylvester1853}. In order to solve this issue,  it can  be worth to work with the modified  resultant $S_{\ell}$ \eqref{eq:modifiedres}, since in this way we  preserve the equality in \eqref{kergeqeigs}.

We start with a full rank Sylvester matrix $S_{\ell}(A, B)$ and we want to perturb the coefficients of the polynomials (in a minimal way) so that the kernel of the associated  resultant $S_{\ell}(\hat{A}, \hat{B})$ has dimension $k=md$. This is done by iteratively adding a structured perturbation to the matrix $S_{\ell}(A, B)$ which minimizes the singular values of interest (the $k$ smallest singular values).  
The rank test on the Sylvester matrix is done by computing its SVD, and in particular it is well known that a matrix has corank $k$ if and only if it has $k$ zero singular values. Exploiting the fact that the singular values are ordered non negative real numbers, we can focus on minimizing only the  $k$-th singular value. 
In particular we write the perturbed matrix as $\hat{S}_{\ell} =S_{\ell}+\epsilon E$, where $\epsilon$ is a scalar measuring the norm of the perturbation, while 
 $E$ is a norm one matrix (w.r.t. the Frobenius norm) 
which identifies as $\eps E$ the  minimizer of  $\sigma_k$  over the ball of matrices whose norm is at most $\eps$. 
In this way we can move $E$ and $\epsilon$ independently, minimizing the $k$-th singular value at one step, and the norm of the perturbation at the other until $\sigma_k = 0$.

These  ideas give raise to the following $2$-levels algorithm : we iteratively consider a matrix of the form $S_{\ell} + \epsilon E$ and we update it on two different independent levels:
\begin{itemize}
\item at the inner level we fix the value of $\epsilon$, and we minimize the functional $\sigma_{k}$ by looking for the stationary points of a system of ODEs for the matrix $E$;
\item at the outer level, we move the value of $\epsilon$ in order to compute the best possible solution.
\end{itemize}

\begin{rmk}
From the numerical point of view the functional $\sigma_k$  does not vanish, but it  only  reaches a fixed small  tolerance. 
\end{rmk}

\subsection{Inner iteration}
We analyze now the inner iteration of the algorithm, where the value of $\epsilon$ is fixed. The goal is to compute an optimal perturbation $E$  that  minimizes the singular value $\sigma_{k}$ of the matrix $S_{\ell}+\epsilon E$ over the set of matrices $E$ of unit Frobenius norm.
To do this we consider a smooth path of matrices $E(t)$ 
of unit Frobenius norm along which the singular value
$\sigma_k$ of $S_{\ell} + \eps E(t)$ decreases.
We exploit the following result about derivatives of eigenvalues for symmetric matrices \cite{Kato1976}.

\begin{lem}
\label{lemma:deriveig}
Let $D(t)$ be a differentiable real symmetric matrix function for $t$ in a neighborhood of $0$, and let $\lambda(t)$ be an eigenvalue of $D(t)$ converging to a simple eigenvalue $\lambda_0$ of $D(0)$ as $t \rightarrow 0$. Let $x_0$ be a normalized eigenvector (s.t. $x_0^{\top} x_0 = 1$) of $D_0$ associated to $\lambda_0$. Then the function $\lambda(t)$ is differentiable near $t=0$ with
\begin{equation*}
\dot{\lambda} = x_0^{\top} \dot{D} x_0
\end{equation*}
\end{lem}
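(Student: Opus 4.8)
The plan is to prove the standard formula for the derivative of a simple eigenvalue of a smoothly varying symmetric matrix by differentiating the eigenpair equation and exploiting the normalization. The key observation is that simplicity of $\lambda_0$ guarantees, via analytic perturbation theory (Kato), that both $\lambda(t)$ and a corresponding unit eigenvector $x(t)$ can be chosen to depend differentiably on $t$ near $t=0$, with $x(0)=x_0$. Once differentiability of the eigenpair is secured, the computation itself is short.

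First I would record the defining relations for $t$ in a neighborhood of $0$:
\begin{equation*}
D(t)\,x(t) = \lambda(t)\,x(t), \qquad x(t)^{\top} x(t) = 1.
\end{equation*}
Differentiating the normalization condition at $t=0$ gives $2\,x_0^{\top}\dot{x}(0) = 0$, so $x_0^{\top}\dot{x}(0)=0$; this is the fact that will kill the unwanted terms. Next I would differentiate the eigenvalue equation with respect to $t$ and evaluate at $t=0$, obtaining
\begin{equation*}
\dot{D}\,x_0 + D_0\,\dot{x}(0) = \dot{\lambda}\,x_0 + \lambda_0\,\dot{x}(0),
\end{equation*}
where $D_0=D(0)$ and the dot on $D$ and $\lambda$ denotes the $t$-derivative at $0$.

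Then I would left-multiply this identity by $x_0^{\top}$. Using the symmetry of $D_0$ together with $D_0 x_0 = \lambda_0 x_0$, the mixed term collapses: $x_0^{\top} D_0 \dot{x}(0) = (D_0 x_0)^{\top}\dot{x}(0) = \lambda_0\, x_0^{\top}\dot{x}(0)$. This exactly cancels the term $\lambda_0\, x_0^{\top}\dot{x}(0)$ on the right-hand side. Invoking $x_0^{\top} x_0 = 1$ on the remaining $\dot{\lambda}$ term then yields the claimed formula
\begin{equation*}
\dot{\lambda} = x_0^{\top}\dot{D}\,x_0 .
\end{equation*}

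The main obstacle is not the algebraic manipulation, which is routine, but justifying that a differentiable choice of eigenvalue and eigenvector exists through $t=0$. This is precisely where the hypothesis that $\lambda_0$ is a \emph{simple} eigenvalue is essential: at a simple eigenvalue the spectral projection depends analytically (hence differentiably) on the perturbation parameter, so one can select $\lambda(t)$ and a unit eigenvector $x(t)$ with $x(0)=x_0$ that are differentiable near $t=0$; without simplicity, eigenvalue branches may only be Lipschitz or fail to be differentiable, and eigenvectors need not vary smoothly. I would therefore cite the perturbation-theoretic result of \cite{Kato1976} to secure this differentiability, after which the three differentiation-and-projection steps above complete the argument.
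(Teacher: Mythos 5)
Your proof is correct. The paper does not actually prove this lemma --- it is stated as a known result and attributed to Kato's perturbation theory --- so there is no in-paper argument to compare against; your derivation (differentiate $D(t)x(t)=\lambda(t)x(t)$ and $x(t)^{\top}x(t)=1$, left-multiply by $x_0^{\top}$, and use symmetry of $D_0$ to cancel the $\dot{x}$ terms) is exactly the standard computation the citation stands in for, and you correctly identify that the only nontrivial point is the existence of a differentiable eigenpair through $t=0$, which is where simplicity of $\lambda_0$ enters. One small imprecision: for a matrix family that is merely differentiable (not analytic) in $t$, the eigenpair branch is differentiable rather than analytic --- this follows from the implicit function theorem applied to $(x,\lambda)\mapsto\bigl(D(t)x-\lambda x,\ x^{\top}x-1\bigr)$, whose Jacobian is invertible at a simple eigenvalue --- but this does not affect the validity of your argument.
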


Assuming that $E(t)$ is smooth we can apply Lemma \ref{lemma:deriveig} to the eigenvalues of the matrix $\hat{S}_{\ell}^{\top}(t) \hat{S}_{\ell}(t) = (S_{\ell} + \epsilon E(t))^{\top} (S_{\ell}+ \epsilon E(t))$, and we observe that the eigenvalues of $\hat{S}_{\ell}^{\top} \hat{S}_{\ell}$ are the squares of the singular values of $\hat{S}_{\ell}$ (we can assume the singular values are differentiable functions since, from the numerical point of view, we do not observe any coalescence among them). 
Omitting the time dependence, we find the following expression for the derivative of $\sigma_{k}$:
\begin{equation}
\label{eq:derivsigma}
\begin{aligned}
\dv{t} \sigma^2 &= v^{\top} \dv{t}(\hat{S}_{\ell}^{\top} \hat{S}_{\ell}) v = 2 \eps \sigma  u^{\top} \dot{E} v\\
\dot{\sigma}_{k} &= \epsilon u^{\top} \dot{E} v,
\end{aligned}
\end{equation}
where $u, v$ are the singular vectors of $\hat{S}_{\ell}$ associated to $\sigma_{k}$;
so the steepest descent direction for the functional $\sigma_k$,
minimizing the function over the admissible set for $\dot{E}$,
 is attained by minimizing $u^{\top} \dot{E} v = \langle uv^{\top}, \dot{E} \rangle$.
We notice that $E \in \mathcal{S}$, and consequently $\dot{E} \in \mathcal{S}$, hence
\begin{equation*}
\label{eq:equalinnerprod}
\langle uv^{\top}, \dot{E} \rangle = \langle P_{\mathcal{S}}(uv^{\top}), \dot{E} \rangle 
\end{equation*}
where the formula for the operator  $P_{\mathcal{S}}$ (the projection of the argument onto the Sylvester structure) is given in the following lemma:
\begin{lem}
Let $\mathcal{S}$ be the set of generalized Sylvester matrices of dimension $m\ell \times 2m(\ell - n)$, and let $H \in \R^{m\ell \times 2m(\ell-n)}$ be an arbitrary matrix. The orthogonal projection with respect to the Frobenius norm of $H$ onto $\mathcal{S}$ is given by (using Matlab notation for the rows/columns of the matrices)
$$
P_{\mathcal{S}}(H) = S_{\ell}(P^1, P^2),
$$
where 
$$
\begin{aligned}
P^1_{n-i} &= \frac{1}{\ell-n} \sum_{j=1}^{\ell-n} H(m(j-1)+1 : mj, m(j-1)+1+mi : m(j+i))  \\
P^2_{n-i} &= \frac{1}{\ell-n} \sum_{j=1}^{\ell-n} H(m(\ell-n) + m(j-1)+1 : m(\ell-n) + mj , \dots \\
                & \ \ \ \ \ \ \ \ \ \ \ \ \ \ \ \ \ \ \ \ \ \ \ \ \ \ m(j-1)+1+mi : m(j+i))  \\
                & \text{for} \ \ \   i = 0, \dots, n.
\end{aligned}
$$
\end{lem}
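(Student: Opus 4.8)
The plan is to use the fact that $\mathcal{S}$ is a \emph{linear} subspace of the inner-product space $\bigl(\R^{m\ell \times 2m(\ell-n)}, \langle\cdot,\cdot\rangle\bigr)$: the constraints defining a generalized Sylvester matrix (certain block positions are forced to vanish, and the remaining ones are prescribed copies of the $2(n+1)$ free coefficient blocks $A_0,\dots,A_n,B_0,\dots,B_n$) are all \emph{linear} equalities. Hence the orthogonal projection $P_{\mathcal{S}}(H)$ exists, is unique, and coincides with the minimizer of $\|H-S\|_F^2$ over $S\in\mathcal{S}$. First I would set up this least-squares problem, parametrizing a generic $S\in\mathcal{S}$ by its coefficient blocks $X_0,\dots,X_n$ (the $A$-part) and $Y_0,\dots,Y_n$ (the $B$-part), each in $\R^{m\times m}$.

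The key observation is that the objective \emph{decouples} over the blocks. In the structure of \eqref{eq:modifiedres} each coefficient $X_{k}$ occupies exactly $\ell-n$ block positions, one per block row of the $A$-part; these positions are pairwise disjoint, distinct coefficients never share a block position, the $A$-part and $B$-part live in disjoint row ranges, and all remaining block positions are identically zero. Consequently I would write
$$
\|H-S\|_F^2 = \Gamma + \sum_{k=0}^{n}\sum_{j=1}^{\ell-n}\bigl\|H^{A}_{k,j}-X_k\bigr\|_F^2 + \sum_{k=0}^{n}\sum_{j=1}^{\ell-n}\bigl\|H^{B}_{k,j}-Y_k\bigr\|_F^2,
$$
where $H^{A}_{k,j}$, $H^{B}_{k,j}$ denote the $m\times m$ blocks of $H$ read off at the positions occupied by $X_k$ and $Y_k$, and $\Gamma$ collects the squared Frobenius norms of the blocks of $H$ sitting in the forced-zero positions, a constant independent of the parameters. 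Since the three groups of terms involve disjoint parameters, the minimization splits into independent subproblems, one per block.

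Each subproblem is the elementary least-squares problem $\min_{X}\sum_{j=1}^{\ell-n}\bigl\|H^{A}_{k,j}-X\bigr\|_F^2$, whose minimizer is the Frobenius barycenter, i.e.\ the arithmetic mean
$$
X_k = \frac{1}{\ell-n}\sum_{j=1}^{\ell-n} H^{A}_{k,j}, \qquad Y_k = \frac{1}{\ell-n}\sum_{j=1}^{\ell-n} H^{B}_{k,j}.
$$
Setting $k=n-i$, this is exactly the claimed formula for $P^1_{n-i}$ and $P^2_{n-i}$, and assembling the $X_k,Y_k$ back into the Sylvester pattern yields $P_{\mathcal{S}}(H)=S_{\ell}(P^1,P^2)$.

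The one step requiring genuine care is the final index bookkeeping: verifying that the Matlab-style slices in the statement enumerate \emph{precisely} the $\ell-n$ block positions occupied by a given coefficient. Concretely, I would check that in the $j$-th block row of the $A$-part the coefficient $X_{n-i}$ sits in rows $m(j-1)+1:mj$ and columns $m(j-1)+1+mi:m(j+i)$ (that is, block column $j+i$), which is exactly the banded placement of $A_{n-i}$ in \eqref{eq:modifiedres}; the $B$-blocks follow the identical column pattern but in block rows shifted down by $m(\ell-n)$, explaining the additional offset $m(\ell-n)$ appearing in the row range of $P^2$. This matching is routine but is the actual content that converts the abstract averaging argument into the explicit projection formula, and it is where I would be most careful to avoid off-by-$m$ errors in the block indices.
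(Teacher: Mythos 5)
Your proposal is correct and follows essentially the same route as the paper's (very terse) proof: both exploit that $\mathcal{S}$ is a linear subspace with orthogonal basis matrices, so the projection reduces to averaging the entries of $H$ over the disjoint positions occupied by each coefficient block. Your version merely spells out the decoupling of the least-squares objective and the barycenter computation that the paper compresses into one sentence.
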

\begin{proof}
 The considered structured Sylvester matrices form a linear subspace and the basis matrices are orthogonal, the closest Sylvester matrix to a given matrix (in the Frobenius norm) is obtained by the inner product with the basis matrices (or equivalently taking the average along the diagonals).
\end{proof}
  
We underline that the projection $P_\mathcal{S}(uv^{\top})$ is different from zero for any pair of singular vectors $u, v$ associated to a non-zero singular value.
\begin{lem}
	\label{lemma:projnonzero}
	If $\sigma > 0$ is a simple singular value of a matrix  $\hat{S}_{\ell}$ with associated singular vectors $u$ and $v$, we have
	$$
	P_{\mathcal{S}} (uv^{\top}) \neq 0.
	$$
\end{lem}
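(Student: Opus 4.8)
The plan is to avoid computing $P_{\mathcal{S}}(uv^{\top})$ explicitly from the projection formula of the previous lemma, and instead exploit the single structural fact that does all the work: the matrix $\hat{S}_{\ell}$ itself belongs to the subspace $\mathcal{S}$. First I would recall the defining property of an orthogonal projection onto a linear subspace: since $P_{\mathcal{S}}$ projects orthogonally onto $\mathcal{S}$ with respect to the Frobenius inner product, for every matrix $H$ and every $X \in \mathcal{S}$ one has $\langle P_{\mathcal{S}}(H), X \rangle = \langle H, X \rangle$, because $H - P_{\mathcal{S}}(H)$ is orthogonal to $\mathcal{S}$.

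Next I would test the projection against the specific element $X = \hat{S}_{\ell}$, which lies in $\mathcal{S}$ by construction (it is a generalized Sylvester matrix). Taking $H = uv^{\top}$ this gives
$$
\langle P_{\mathcal{S}}(uv^{\top}), \hat{S}_{\ell} \rangle = \langle uv^{\top}, \hat{S}_{\ell} \rangle = \tr\!\left( v u^{\top} \hat{S}_{\ell} \right) = u^{\top} \hat{S}_{\ell}\, v .
$$
The final step uses the singular-vector relation $\hat{S}_{\ell} v = \sigma u$ together with the normalization $u^{\top} u = 1$, which yields $u^{\top} \hat{S}_{\ell} v = \sigma\, u^{\top} u = \sigma > 0$. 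Since the Frobenius inner product of $P_{\mathcal{S}}(uv^{\top})$ with the particular matrix $\hat{S}_{\ell}$ is strictly positive, $P_{\mathcal{S}}(uv^{\top})$ cannot be the zero matrix, which is exactly the claim.

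I would flag that there is essentially no obstacle here beyond bookkeeping: the only point requiring care is the sign and normalization convention, namely that $u$ and $v$ are genuinely unit-norm left and right singular vectors paired so that $\hat{S}_{\ell} v = \sigma u$. This is where the hypothesis that $\sigma$ is a simple singular value enters — it guarantees that $u$ and $v$ are well defined up to a common sign, which cancels in the outer product $uv^{\top}$, so no ambiguity affects the computation. The genuine content of the argument is the choice to pair $uv^{\top}$ against $\hat{S}_{\ell} \in \mathcal{S}$ rather than attempting the explicit averaging-along-diagonals formula; everything after that choice is a single line.
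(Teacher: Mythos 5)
Your argument is correct and is essentially identical to the paper's proof: both pair $P_{\mathcal{S}}(uv^{\top})$ against $\hat{S}_{\ell}\in\mathcal{S}$ via the orthogonal-projection identity and compute $\langle uv^{\top},\hat{S}_{\ell}\rangle = u^{\top}\hat{S}_{\ell}v=\sigma>0$. The only cosmetic difference is that the paper phrases this as a proof by contradiction while you argue directly.
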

	\begin{proof}
	 Assume, by contradiction, that we have $P_{\mathcal{S}} (uv^{\top}) = 0$.
	Doing some computations, we get
	\begin{equation}
	\label{eq:projnonzero}
	0 = \langle  P_{\mathcal{S}}(uv^{\top}), \hat{S}_{\ell} \rangle  =  \langle uv^{\top}, \hat{S}_{\ell} \rangle = u^{\top} \hat{S}_{\ell} v = \sigma > 0
	\end{equation}
	since $\sigma > 0$ by assumption. Consequently \eqref{eq:projnonzero}  is a contradiction, and the claim follows.
\end{proof}

\subsubsection{Minimization problem}
We found in \eqref{eq:derivsigma} the expression for the derivative of the singular value $\sigma_{k}$ of the Sylvester matrix $\hat{S}_{\ell}=S_{\ell} + \epsilon E$. In order to compute the steepest descent direction for  $\sigma_{k}$  we need to compute

\begin{equation}
\label{eq:minprob}
G = \textrm{arg}\min_{\substack{\dot{E} \in \mathcal{S} \\ \| \dot{E} \| = 1 \\ \langle E, \dot{E} \rangle = 1}} u^{\top} \dot{E} v 
\end{equation}
where the constraint on the norm is added in order to select a unique solution, since we look for a direction.
The solution of \eqref{eq:minprob} is given by:
\begin{equation}
\label{eq:ode}
\dot{E} = -P_{\mathcal{S}}(uv^{\top}) + \langle E, P_{\mathcal{S}}(uv^{\top}) \rangle E
\end{equation}
(the proof \cite[Section 4.2]{Guglielmi2017}  is based on the projection of an element in an Euclidean space onto the intersection of two linear subspaces). Consequently \eqref{eq:ode} is the key point of the inner iteration of the proposed algorithm. The following result shows its importance:

\begin{thm}
\label{th:derivsigmadecreasing}
Let $E(t) \in \mathcal{S}$ be a matrix of unit Frobenius norm, which is a solution of \eqref{eq:ode}. If $\sigma$ is the singular value of $\hat{S}_{\ell} = S_{\ell} + \epsilon E$ associated to the singular vectors $u, v$, then $\sigma(t)$ is decreasing, i.e.
\begin{equation*}
\dot{\sigma} \leq 0.
\end{equation*}
\end{thm}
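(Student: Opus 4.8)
The plan is to substitute the explicit form of the ODE \eqref{eq:ode} into the expression for $\dot{\sigma}$ found in \eqref{eq:derivsigma}, and then to recognize the resulting quantity as a difference that is controlled by the Cauchy--Schwarz inequality.

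First I would recall from \eqref{eq:derivsigma} that $\dot{\sigma} = \eps\, \langle uv^{\top}, \dot{E} \rangle$. Since the path $E(t)$ stays in the linear subspace $\mathcal{S}$, its velocity $\dot{E}$ also lies in $\mathcal{S}$, so the orthogonal projection identity used just before the projection lemma gives $\langle uv^{\top}, \dot{E} \rangle = \langle P_{\mathcal{S}}(uv^{\top}), \dot{E} \rangle$. Writing $G := P_{\mathcal{S}}(uv^{\top})$ for brevity, I would then insert the right-hand side of \eqref{eq:ode}, namely $\dot{E} = -G + \langle E, G \rangle E$, to obtain
$$
\dot{\sigma} = \eps\, \langle G,\, -G + \langle E, G \rangle E \rangle = \eps \left( -\|G\|_F^2 + \langle E, G \rangle^2 \right),
$$
where I used the symmetry of the Frobenius inner product so that $\langle G, E \rangle \langle E, G \rangle = \langle E, G \rangle^2$.

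The concluding step is to bound $\langle E, G \rangle^2$ by Cauchy--Schwarz: since $\|E\|_F = 1$, we have $\langle E, G \rangle^2 \leq \|E\|_F^2\, \|G\|_F^2 = \|G\|_F^2$. Substituting this estimate yields $\dot{\sigma} \leq \eps\,(-\|G\|_F^2 + \|G\|_F^2) = 0$, the sign being preserved because $\eps \geq 0$. This establishes $\dot{\sigma} \leq 0$, as claimed.

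I do not expect any serious obstacle here, since the argument is the standard projected-gradient-flow estimate: the term $\langle E, G\rangle E$ in \eqref{eq:ode} is exactly the component that projects $-G$ onto the tangent space of the unit Frobenius sphere, so moving along $\dot{E}$ cannot increase $\sigma$. The only point requiring care is the transition from $\langle uv^{\top}, \dot{E}\rangle$ to $\langle G, \dot{E}\rangle$, which relies on $\dot{E}\in\mathcal{S}$ together with $G$ being the orthogonal projection onto $\mathcal{S}$. One may additionally observe that equality $\dot{\sigma}=0$ holds precisely when $E$ is parallel to $G = P_{\mathcal{S}}(uv^{\top})$, which characterizes the stationary points of the inner iteration; by Lemma \ref{lemma:projnonzero} these remain nontrivial as long as $\sigma > 0$.
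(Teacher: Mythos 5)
Your proof is correct and follows essentially the same route as the paper: substitute the right-hand side of \eqref{eq:ode} into $\dot{\sigma}=\eps\,u^{\top}\dot{E}v$, use the fact that $E,\dot{E}\in\mathcal{S}$ to replace $uv^{\top}$ by its projection $P_{\mathcal{S}}(uv^{\top})$, and conclude via $\langle E,P_{\mathcal{S}}(uv^{\top})\rangle^{2}\le\|P_{\mathcal{S}}(uv^{\top})\|_{F}^{2}$ using $\|E\|_F=1$. The only difference is cosmetic — you invoke Cauchy--Schwarz explicitly where the paper leaves that final estimate implicit.
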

\begin{proof}
In the proof we show that  $\dot{\sigma} \leq 0$. We remember the expression for $\dot{\sigma} = u^{\top} \dot{E} v$ (up to constant factors).  Exploiting  \eqref{eq:ode} to replace $\dot{E}$, we have two terms: the first is 
$$
u^{\top} P_{\mathcal{S}}(uv^{\top}) v = \langle uv^{\top}, P_{\mathcal{S}}(uv^{\top}) \rangle = \| P_{\mathcal{S}}(uv^{\top}) \|_F^2
$$
which follows from the structure of $P_{\mathcal{S}}(uv^{\top})$. The second is 
$$
u^{\top} \langle E, P_{\mathcal{S}}(uv^{\top}) \rangle E v = \langle E, P_{\mathcal{S}}(uv^{\top}) \rangle \langle E, uv^{\top} \rangle =  \langle E, P_{\mathcal{S}}(uv^{\top}) \rangle^2
$$
which follows from the Sylvester structure of $E$. 
Summing the two terms with the correct signs we have
$$
\dot{\sigma} = u^{\top} \dot{E} v = -\| P_{\mathcal{S}}(uv^{\top}) \|_F^2  +  \langle E, P_{\mathcal{S}}(uv^{\top}) \rangle^2 \leq 0
$$
since $\| E \|_F = 1$.
\end{proof}

\textcolor{blue}{
Theorem \ref{th:derivsigmadecreasing} and Lemma \ref{lemma:projnonzero}
guarantee that the function $\sigma(t)$ is monotonically decreasing (for a fixed value of $\epsilon$). Therefore  the stationary points of the ODE (corresponding to the zeros of $\dot{\sigma}$)  are the  candidate local minima for the functional under the considered constraints.} The following corollary provides a rigorous characterization
of minimizers.
\begin{corollary}
Consider a solution of  \eqref{eq:ode}, and assume the corresponding singular value $\sigma > 0$. The following statements are equivalent:
\begin{enumerate}
\item $\dot{\sigma} = 0$
\item $\dot{E} = 0$
\item $E$ is a scalar multiple of $P_{\mathcal{S}}(uv^{\top})$.
\end{enumerate}
\end{corollary}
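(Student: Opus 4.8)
The plan is to reduce everything to the two formulas already in hand. Writing $G = P_{\mathcal{S}}(uv^{\top})$ and $\gamma = \langle E, G \rangle$ for brevity, the ODE \eqref{eq:ode} reads $\dot{E} = -G + \gamma E$, while the computation carried out in the proof of Theorem \ref{th:derivsigmadecreasing} gives $\dot{\sigma} = \gamma^2 - \|G\|_F^2$. The first thing I would record is that the hypothesis $\sigma > 0$ lets me invoke Lemma \ref{lemma:projnonzero} to conclude $G \neq 0$, hence $\|G\|_F > 0$; this nondegeneracy is exactly what keeps the scalar factor in statement (3) from collapsing to the trivial case.

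I would establish (1) $\iff$ (3) using the equality case of the Cauchy--Schwarz inequality. Since $\|E\|_F = 1$, we have $\gamma^2 = \langle E, G \rangle^2 \leq \|E\|_F^2 \|G\|_F^2 = \|G\|_F^2$, with equality precisely when $E$ and $G$ are linearly dependent. Because $G \neq 0$, linear dependence means $E = c\,G$ for some scalar $c$, which is statement (3). Substituting into $\dot{\sigma} = \gamma^2 - \|G\|_F^2$ shows that $\dot{\sigma} = 0$ holds if and only if this equality case occurs, giving the equivalence immediately.

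I would then close the triangle with (2) $\iff$ (3) by reading the ODE directly. If $\dot{E} = 0$ then $G = \gamma E$, and since $G \neq 0$ we must have $\gamma \neq 0$, so $E = \gamma^{-1} G$ is a scalar multiple of $G$. Conversely, if $E = c\,G$ then the normalization $\|E\|_F = 1$ forces $|c| = \|G\|_F^{-1}$, in particular $c \neq 0$ and $c^2 \|G\|_F^2 = 1$; computing $\gamma = \langle c\,G, G \rangle = c\|G\|_F^2$ and inserting it into the ODE yields $\dot{E} = -G + (c^2\|G\|_F^2)\,G = 0$. Combining the two steps gives the full three-way equivalence.

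The only point requiring genuine care is securing $G \neq 0$: without it, statement (3) could be satisfied trivially by the zero multiple and the chain of equivalences would break down. This is why the assumption $\sigma > 0$ and Lemma \ref{lemma:projnonzero} are essential. Everything else is the equality condition of Cauchy--Schwarz together with a single substitution into \eqref{eq:ode}, so I do not anticipate any serious obstacle beyond keeping track of the unit-norm constraint $\|E\|_F = 1$.
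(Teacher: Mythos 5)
Your proof is correct and complete. The paper in fact states this corollary without proof, but the intended argument is clearly the one you give: the identity $\dot{\sigma} = \langle E, P_{\mathcal{S}}(uv^{\top})\rangle^2 - \|P_{\mathcal{S}}(uv^{\top})\|_F^2$ from the proof of Theorem \ref{th:derivsigmadecreasing} plus the equality case of Cauchy--Schwarz under the constraint $\|E\|_F = 1$, and a direct substitution into \eqref{eq:ode} for the equivalence with $\dot{E}=0$. You correctly identify the one point of substance, namely that $\sigma>0$ and Lemma \ref{lemma:projnonzero} guarantee $P_{\mathcal{S}}(uv^{\top})\neq 0$, without which statements (1) and (2) would hold vacuously while (3) would fail.
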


\subsubsection{ODE integration}
We discuss here how to compute the solution of the ODE \eqref{eq:ode}.
Since \eqref{eq:ode} is a constrained gradient system, the value of $\sigma_k$ is monotonically decreasing, as we can see  in Figure \ref{fig:plotzvec}.
\begin{figure}[h]
\centering
\includegraphics[height=9cm, width=12cm]{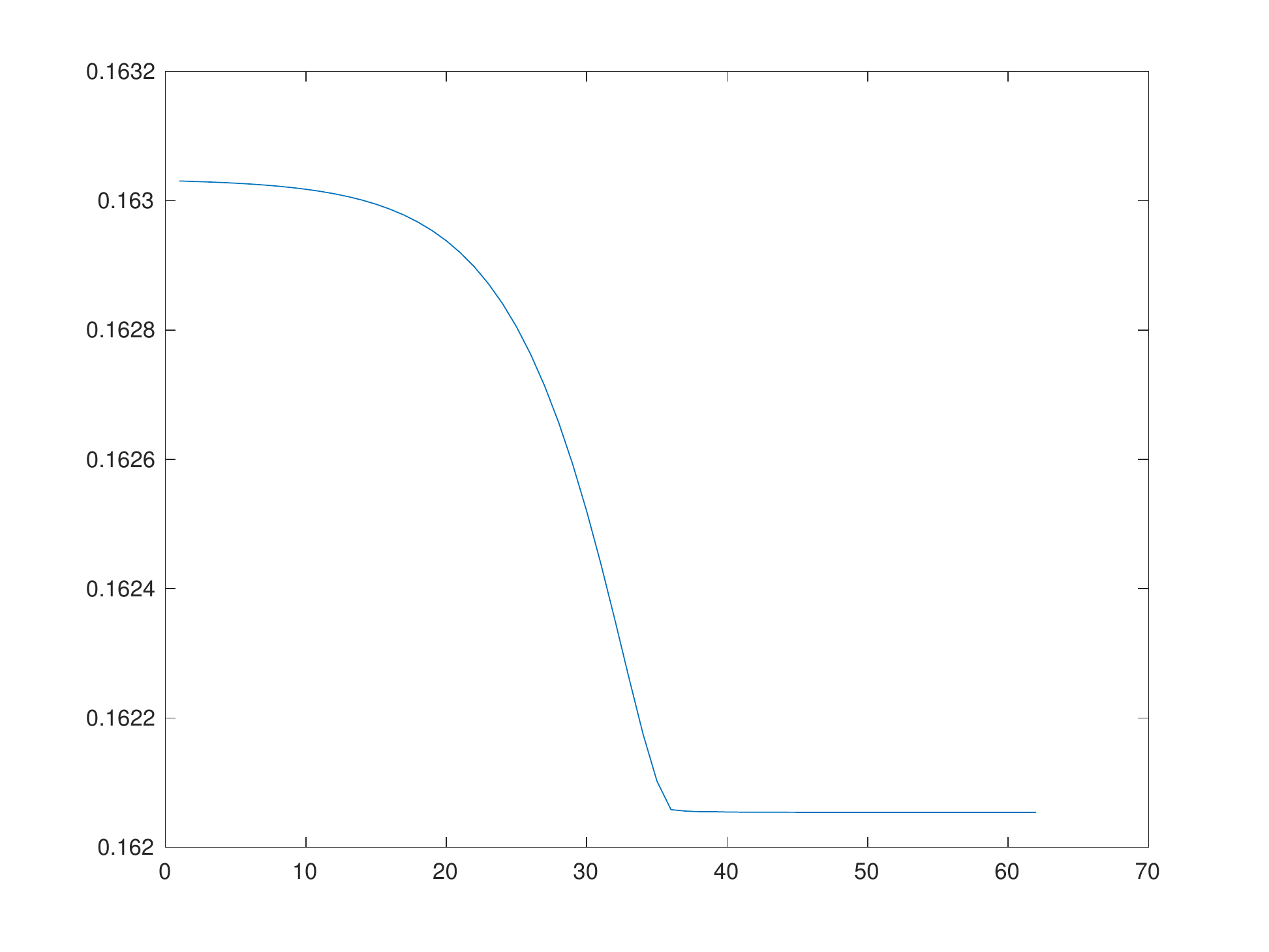}
\caption{\textcolor{blue}{Typical singular values $\sigma_k$ of   matrix $S_{\ell} + \epsilon E$ by increasing  iterations in the ODE  integration process  \eqref{eq:ode} (for a fixed $\epsilon$).  The final value corresponds to the stationary point of the equation \eqref{eq:ode}.The data for the plot are choosen randomly.}  \label{fig:plotzvec}}
\end{figure}
The function evaluation required in the integration of the equation is expensive because it involves the computation of a SVD  at each step (we need both the singular value and the corresponding singular vectors), so a suitable  choice is that of using  the explicit Euler scheme. We summarize the pseudo-code in Algorithm \ref{alg:controlODE}. We remark that the performances of the code can change depending on the values of some parameters (which depend on the starting data).

\begin{algorithm2e}[ht]
\caption{Numerical solution of the ODE \eqref{eq:ode}}
\label{alg:controlODE}
\DontPrintSemicolon
\KwData{$A, B$ (or equivalently the associated Sylvester matrix), $\sigma_k$, $u$, $v$, $h$ (step Euler method), $\gamma$ (step size reduction), \rm{tol}  and $\epsilon$.}
\KwResult{$\bar{E}$, $\bar{\sigma}_k$, $\bar{u}$, $\bar{v}$ and $\bar{h}$}

\Begin{
\nl Set $\tilde{h}=h$ \;
\nl Compute $\dot{E} = - P_S(u v^{\top}) + \langle E, P_S(u v^{\top}) \rangle E $ \;
\nl Euler step $\rightarrow$ $ \tilde{E} = E + h \dot{E}$ \;
\nl Normalize $\tilde{E}$ dividing it by its Frobenius norm \;
\nl Compute the  singular value $\tilde{\sigma}_k$ of the matrix $\tilde{S} = S_{\ell} + \epsilon \tilde{E}$ \;
\nl Compute the  singular vectors $\tilde{u}$ and $\tilde{v}$ of the matrix $\tilde{S}$ associated to $\tilde{\sigma}_k$ \;
\nl \eIf{ $\tilde{\sigma}_k > \sigma_k$}{
      reject the result and reduce the step $\tilde{h}$ by a factor $\gamma$ \;
      repeat from 3} 
      {accept the result; set $\bar{h} = h$, $\bar{\sigma}_k = \tilde{\sigma}_k$, $\bar{u} = \tilde{u}$, $\bar{v} = \tilde{v}$ \;
 \nl \If{$\bar{\sigma}_k - \sigma_k < tol$ \textbf{or} $\bar{\sigma}_k \leq tol$}
 {\textbf{return}} \;
    } 
 \nl \eIf{$\bar{h} = h$}
 {increase the step size of $\gamma$, $\bar{h} = \gamma h$}
 {set $\bar{h} = h$} \;
    
 }
\end{algorithm2e}

\subsection{Outer iteration}
\textcolor{blue}{
Once we integrate the ODE \eqref{eq:ode}, we find its stationary point $E$ and the corresponding $\sigma_k$. Since these quantities depend on a (fixed) value of $\epsilon$  we denote them as $\sigma_k(\epsilon), E(\epsilon)$.  The next step is to  find the minimal value $\eps$ (the norm of the 
perturbation to the original Sylvester matrix) which solves
the problem $\sigma_k(\eps) = 0$.
Observe that  the distance between the two matrices  is given by $\eps$ because of the relation $\hat{S}_{\ell} - S_{\ell} = \eps E$.}
  
Increasing the value of $\eps$, due to the  choice of an initial value
for the matrix $E$ in the gradient system \eqref{eq:ode}, can lead to unexpected    trajectories for the function $\sigma_k(\eps)$, that is
$\sigma_k(\eps)$ does not decrease.
The observed behavior can be due to possibly poor initialization for the ODE: it can happen that by  increasing the value of $\eps$ without changing the perturbation $E(\eps)$ in the initial datum, the equation reaches a stationary point before the objective functional decreases. 
In order to have a global decreasing
property with respect to both the inner and the outer iteration
we can 
 iteratively  alternate the following  dynamics:

\begin{enumerate}
	\item starting from the matrix $S_{\ell} + \hat{\epsilon} \hat{E}$, we integrate  (for a given $\eps > \hat{\eps}$)
	  the equation
	\begin{equation}
	\label{eq:odefree}
	\dot{E} = -P_S ( u v^T),  \qquad E(0) = \hat{E}
	\end{equation}
	where $\hat{E}$ is the computed equilibrium of the ODE \eqref{eq:ode}
	 corresponding to the value $\hat{\eps}$ and $u, v$ are the singular 
	vectors  associated with $\sigma_k$.

	 This equation is still a gradient system for the objective functional obtained from \eqref{eq:ode} by removing the constraints on the norm of $E$.  The solution is expected to increase in norm  while the objective functional decreases, so  we stop the integration of the equation  when the norm of the perturbation $E$  reaches the level
	 \begin{equation*}
	  \| E \|_F = \frac{\epsilon}{\hat{\eps}};
	  	\end{equation*}

	\item starting from the solution computed in point 1 (applying a normalization $\| E \|_F = 1$), integrate  \eqref{eq:ode} with initial datum $\eps E$  (using Algorithm \ref{alg:controlODE}). 
\end{enumerate}
The idea behind this computational strategy is to start each iteration at the endpoint of the  previous one, in a way that $\sigma_k(\epsilon)$ is continuous and monotonically decreasing with respect to $\epsilon$. This is obtained by integrating the ODE \eqref{eq:odefree} between two consecutive values of $\epsilon$.  The main body of this computational method is in Algorithm \ref{alg:ODE}.

\begin{algorithm2e}[H]
	\caption{ODE-based method for the computation of approximate common factors of matrix polynomials}
	\label{alg:ODE}
	\DontPrintSemicolon
	\KwData{$A, B$ ($m \times m$ matrix polynomials of degree $n$), $d$ (degree common factor), $tol$ (\textit{zero} tolerance), $\Delta$ (increase for the norm of perturbation)}
	\KwResult{$\hat{A}, \hat{B}$}

	\Begin{
		\nl Build $\mathcal{S} = \mathcal{S}_{\ell}(A, B)$ \;
		\nl Set $\epsilon=10^{-2}$           \ \ \ \ \ \ \ \ \ \ \ \ \      \%  starting value \;
		\nl Integrate the equation \eqref{eq:ode} \;
		store $E, \sigma=\sigma_{k}(\mathcal{S}+\epsilon E)$  \;
		\nl \While{$\sigma > tol$}
		{\nl $\epsilon_1=\epsilon + \Delta$  \;
			\nl       integrate the equation \eqref{eq:odefree} with initial datum $\mathcal{S} + \epsilon  E$ \;
			and stop when $ \epsilon / \epsilon_1 \| E \|_F \geq 1$ \;
			store $E, \sigma = \sigma_k(\mathcal{S}+\epsilon E)$  \;
			set $\epsilon_1= \epsilon  \| E \|_F$   \;
			\nl  integrate equation \eqref{eq:ode} with initial datum $\mathcal{S} + \epsilon_1 E$  \;
			store $E, \sigma = \sigma_k(\mathcal{S} + \epsilon_1 E)$    \;
			set $\epsilon=\epsilon_1$  }
		 $\hat{A}, \hat{B}$ are  recovered from $\mathcal{S}_{\ell}(\hat{A}, \hat{B}) = \mathcal{S} + \epsilon E$

	}
\end{algorithm2e}

\begin{rmk}
	(Computational cost) First of all we remark that the update of $\epsilon$ does not affect the computational cost since it is only one flop per iteration, and the two iteration levels (inner and outer) are independent. All the computations are developed at the inner level, i.e., during the integration of the gradient system. As described in this section, there are two different (alternating) dynamics: the unconstrained dynamic \eqref{eq:odefree} and the constrained one \eqref{eq:ode}.  The integration of each equation is an iterative algorithm which performs a SVD per iteration till the stopping criterion is reached (see Algorithm \ref{alg:controlODE}). Such decomposition is computed through the whole factorization of the matrix, hence the number of flops is expected to be cubic in the dimension of the data matrix (a possible improvement is object of future work). However it is not easy to estimate a priori the number of iterations needed by the integrator in order to reach the convergence, hence to guess the computational cost of the  algorithm. 
	As stated, the two iterations (inner and outer) are independent: however a poor accuracy in the inner iteration can determine also an
	inaccurate change of $\epsilon$, therefore a slowdown  of the process.
\end{rmk}

\subsection{GCD Computation}
\label{sec:gcdcomp}
In this paragraph we discuss how to extract the GCD from the perturbed polynomials computed by the ODE-based algorithm proposed in Section \ref{sec:ode}. We saw in Remark \ref{rmk:fromgcdtopol}  that given the GCD, we can obtain the polynomials $\hat{A}$, $\hat{B}$ by solving a least squares problem, but here the problem is more difficult. 

The first idea to compute the sought common factor from the non-coprime polynomials $\hat{A}, \hat{B}$ is to apply a fast and computationally cheap algorithm  (e.g. the subspace method proposed in Section \ref{sec:ss}). 

Alternatively we can make use of an external function for (exact) GCD computation for matrix polynomials.
 A suitable function comes from the Polyx Toolbox (\url{www.polyx.com}),  referring to the function \textit{grd.m} or \textit{gld.m} depending on the interest in computing a right or a left common factor, respectively. 
 
 \paragraph{\textbf{The functions \textit{grd.m} and \textit{gld.m}}}
 We briefly explain here how the two functions \textit{grd.m} and \textit{gld.m} from the Polyx Toolbox (\url{www.polyx.com}) work. We state the idea of the algorithm for right common factors computation, but dealing with left common factors has analogous counterparts. 
 
 Consider the matrix polynomials
 $$
 \begin{aligned}
 N_1(z) &= N_{10} + N_{11}z + \cdots + N_{1w}z^w \\
 N_2(z) &= N_{20} + N_{21}z + \cdots + N_{2w}z^w
 \end{aligned}
 $$ 
 having the same number of columns $m_N$, and define $N=\begin{bmatrix}
 N_1 \\ N_2
 \end{bmatrix} $. Consider the resultants $S_{w+ \ell}(N_1, N_2)$ (as defined in \eqref{eq:modifiedres}) for increasing $\ell=1, 2, \dots$. Each Sylvester matrix is then reduced to its shifted row Echelon form by a Gaussian elimination algorithm without row permutations. According to \cite{Barnett1983} the last $m_N$ nonzero rows of $S_{\bar{\ell}}$ yield the coefficients of a greatest common right divisor of $N_1, N_2$, where $\bar{\ell}$ is defined as the smallest integer such that 
 $$
 \rank (S_{w + \bar{\ell} +1}) - \rank (S_{w + \bar{\ell}}) = m_N.
 $$

However we remember these functions are thought for exact GCD computation, while the output polynomials computed by the proposed ODE based algorithm have not an exact GCD (the singular values of the resultant decrease up to a small tolerance but they do not reach the zero).
 In particular, we can observe some of the following issues:
\begin{enumerate}
\item the computed GCD equals the identity (so the functions are not able to reveal the presence of a common factor);
\item the leading coefficient of the GCD is singular, while we always assume the common factors  are monic (in particular the leading coefficient is full rank);
\item the computed GCD has degree higher than expected.
\end{enumerate}
Most of the times  no one of the previous facts is verified, and in these cases the common factors computed by the function \textit{grd.m} match the ones computed by the subspace method.

\section{Numerical experiments}
\label{sec:numericaltest}
\textcolor{blue}{In this section, we consider the performances of the proposed algorithms \ref{alg:sub} and \ref{alg:ODE}.} As stated before, there is no term of comparison in the scientific literature (up to our  knowledge), so the results of our algorithms are compared with the solutions obtained through the Matlab function $fminsearch$. 
\textcolor{blue}{
First of all we show a numerical example which highlights how the two Algorithms  \ref{alg:sub} and \ref{alg:ODE} work.  We consider   the following $2 \times 2$ matrix polynomials of degree $2$
\begin{equation*}
\begin{aligned}
\hat{A}(\lambda) &=  \begin{pmatrix}
 \lambda+1 & -\lambda \\  -\lambda+3 & -1
\end{pmatrix} \begin{pmatrix}
\lambda+1 & -1 \\ 1 &  \lambda+1
\end{pmatrix} = \begin{pmatrix}
\lambda^2+\lambda+1 & -\lambda^2 -2\lambda-1 \\
-\lambda^2+2\lambda+2 & -4
\end{pmatrix},  \\
\hat{B}(\lambda) &=  \begin{pmatrix} 
1 & -\lambda-1 \\ 3\lambda-1 & -\lambda
\end{pmatrix} \begin{pmatrix}
\lambda+1 & -1 \\ 1 &  \lambda+1
\end{pmatrix} = \begin{pmatrix}
0 & -\lambda^2-2\lambda-2 \\
3\lambda^2+\lambda-1 & -\lambda^2-4\lambda+1
\end{pmatrix}.
\end{aligned}
\end{equation*}
We generate then the data $A(\lambda), B(\lambda)$ by perturbing  all the coefficients with  normally distributed random noise with zero mean and standard deviation $0.1$.  Starting from the noisy data, Algorithm \ref{alg:sub} computes the following polynomials and the associated common factor:
\begin{equation*}
\begin{aligned}
\hat{A}^1(\lambda) &= \begin{pmatrix}
0.965 \lambda^2 + 0.951 \lambda + 0.309 &
-0.852 \lambda^2 -2.097 \lambda -1.216 \\
-1.185 \lambda^2 +1.608 \lambda +2.020 &
-0.391 \lambda^2 +0.018 \lambda -4.245
\end{pmatrix} \\
\hat{B}^1(\lambda) &= \begin{pmatrix}
0.509 \lambda^2 +1.032 \lambda +0.621 &
-0.724 \lambda^2 -1.594 \lambda -1.737 \\
2.827 \lambda^2 +0.569 \lambda -1.009 &
-1.349 \lambda^2 -4.100 \lambda +1.019
\end{pmatrix} \\
C^1(\lambda) &= \begin{pmatrix}
\lambda+0.762 & 0.191 \\ 1.949 & \lambda-0.798
\end{pmatrix}.
\end{aligned}
\end{equation*}
Starting from the same data, Algorithm \ref{alg:ODE} computes the following numerical solution:
\begin{equation*}
\begin{aligned}
\hat{A}^3(\lambda) &= \begin{pmatrix}
1.158 \lambda^2 +0.928 \lambda + 0.891 &
-1.000 \lambda^2 -2.074 \lambda -0.995 \\
-1.008 \lambda^2 +2.022 \lambda +2.166 &
-0.044 \lambda^2 +0.104 \lambda -4.154 
\end{pmatrix} \\
\hat{B}^3(\lambda) &= \begin{pmatrix}
0.040 \lambda^2 +0.070 \lambda -0.022 &
-0.991 \lambda^2 -1.902 \lambda -2.129 \\
2.940 \lambda^2 +0.889\lambda -1.013 &
-1.056 \lambda^2 -4.026 \lambda +1.039
\end{pmatrix} \\
C^3(\lambda) &= \begin{pmatrix}
\lambda+1.051 & -1.088 \\ 1.177 & \lambda+0.779
\end{pmatrix}.
\end{aligned}
\end{equation*}
We observe that both the polynomials and the common factor computed by Algorithm \ref{alg:ODE} are closer to the noiseless data than the ones computed by Algorithm \ref{alg:sub}.
}
\textcolor{blue}{
The result of the previous experiment is quite general: this is observed by  running now more  examples with random data, where we neglect the numerical values. 
}
We  generate  data polynomials having an exact common factor, and we add normal distributed  perturbations multiplied by a constant (called noise level) in the interval $[0, 1]$  in order to analyze the solution computed by the different approaches.  We focus only on the values of the computed distances. 
In the following experiments we generate fifty perturbations (for a given value of standard deviation) and we plot the average  distance computed by the different algorithms.

In  Figure \ref{fig:distr1} we have two $2 \times 2$ matrix  polynomials of degree $3$ and we compute an approximate (monic) common factor of degree one.

\begin{figure}[H]
\centering
\includegraphics[height=9cm, width=12cm]{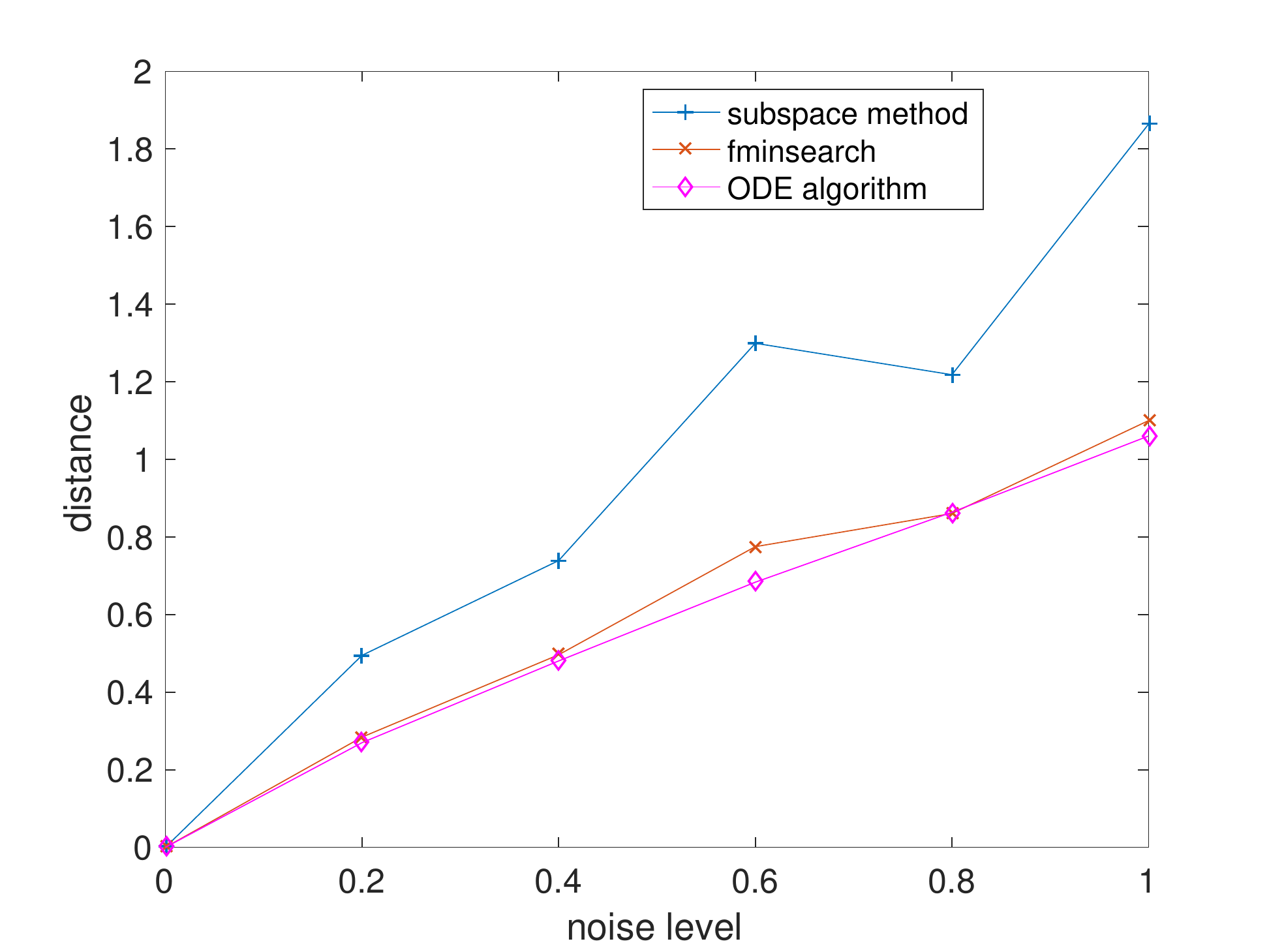}
\caption{\textcolor{blue}{Average distance as a function of noise intensity in an approximate common factor  of degree 1 for two $2 \times 2$ matrix polynomials of degree $3$.} \label{fig:distr1}}
\end{figure}

From the graph we can observe that the proposed ODE-based algorithm obtains better solutions (in terms of accuracy) than the subspace method, as it happened in the case of scalar polynomials \cite{Fazzi2018}.  
We need to make some comments about the minimization through the Matlab function \textit{fminsearch}. People familiar with Matlab know this function needs an initial approximation in input, so we can ask if the performances observed in Figure \ref{fig:distr1} depend on the (possibly poor) initialization. In Figure \ref{fig:distr1} the initial estimate is the solution computed by the subspace method, so it is not a bad choice but neither the best one since we observe the (average) computed distances are bigger than the ones computed by the ODE algorithm. If we initialize the function with the GCD computed by the proposed ODE-based algorithm, the solutions computed by the Matlab minimization improves the one got by the proposed method. In Figure \ref{fig:distr2} we observe a similar numerical example  where we added the distances computed by the function \textit{fminsearch} with different initializations (random, solution of the subspace method, solution of the ODE algorithm). We notice how the different initial estimates for the function \textit{fminsearch} influence the accuracy of the obtained solution. 

\begin{figure}[H]
\centering
\includegraphics[height=9cm, width=12cm]{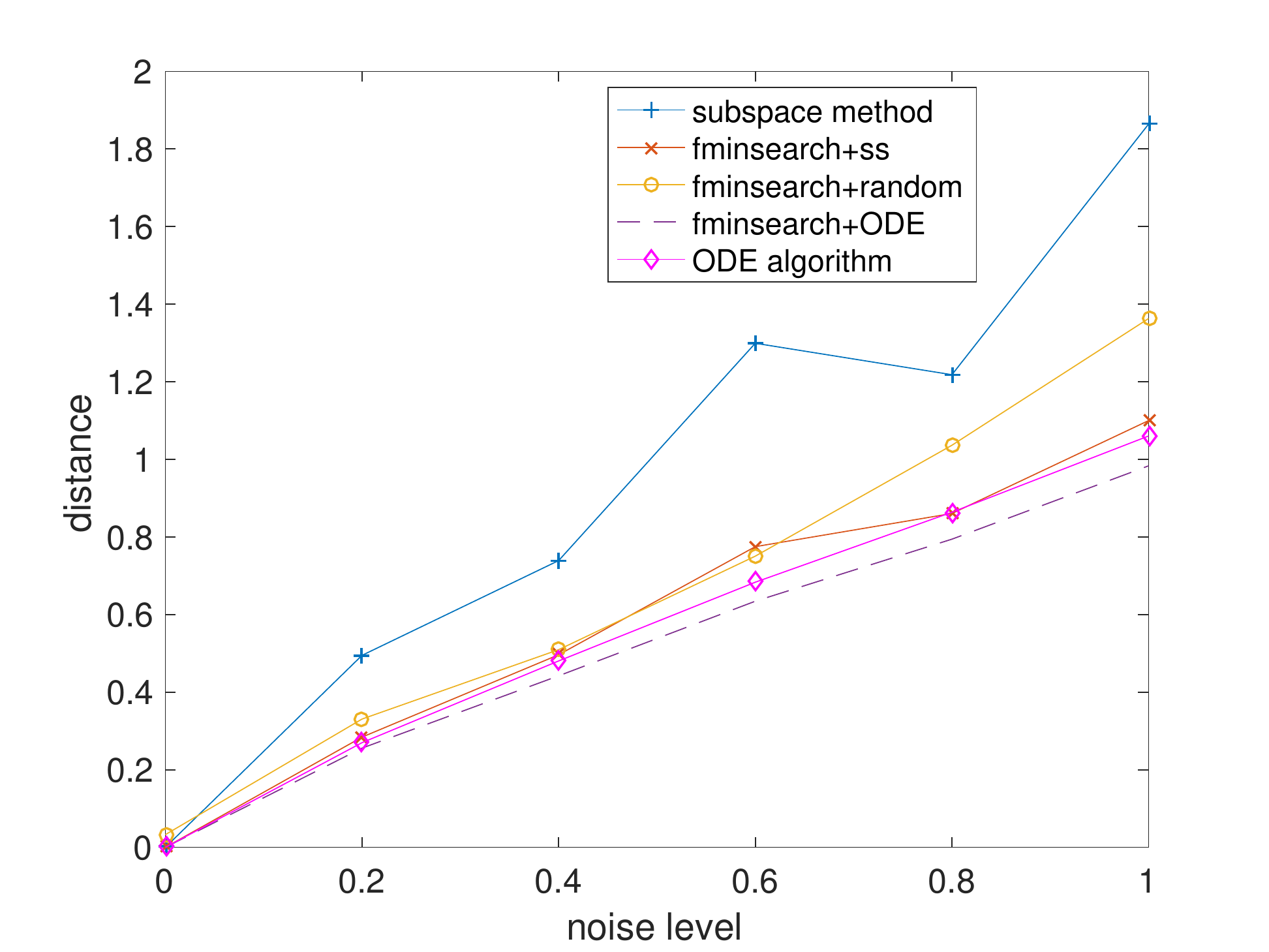}
\caption{\textcolor{blue}{Average distance as a function of noise intensity with different Matlab minimization processes in an  approximate common factor of degree 1 for two $2 \times 2$ matrix polynomials of degree $3$.} \label{fig:distr2}}
\end{figure}

\begin{rmk}
(Computational time) The subspace method is very fast
due to its low number of arithmetic operations. The proposed ODE-based 
algorithm  is (on average) faster than the function \textit{fminsearch}, whose 
performances depend on the initial estimate.
\end{rmk}

\section{Applications in system and control theory}
\label{sec:applications}
We show in this section an  application of the proposed algorithms. It extends the computation of \textit{distance to uncontrollability} from Single-Input Single-Output (SISO) systems (presented in \cite{Markovsky2018}) to Multi-Input Multi-Output (MIMO)  systems. However we  remind that any problem involving exact GCD computation for matrix polynomials can be seen as an approximate GCD computation problem whenever the coefficient are inexact, e.g. they come from measurements, computations \textcolor{blue}{or they are affected by perturbations \cite{Luo17}}.

\paragraph{Controllability for LTI systems}
Consider the linear time invariant system $\mathcal{B}$ defined by its state space representation
\begin{equation}
\label{eq:system}
\begin{cases}
\dot{x} &= Ax + Bu \\
y &= Cx + Du
\end{cases}
\end{equation}
where $A \in \R^{n \times n}$, $B \in \R^{n \times m}$, $C \in \R^{p \times n}$, $D \in \R^{p \times m}$.
The classical notion of controllability for \eqref{eq:system} is a property of the matrices $A, B$ and it is related to the rank of the matrix
\begin{equation}
\label{eq:contmat}
\mathcal{C}(A, B) = (B \ \   AB \  \  \cdots \ \   A^{n-1}B).
\end{equation}
In particular the system \eqref{eq:system} is state controllable if and only if the matrix $\mathcal{C}$ in \eqref{eq:contmat} is full rank. This definition of controllability is not a property of the system, but  of the matrices $A$ and $B$; consequently  distance problems associated to the matrix $\mathcal{C}$ may not have a well-defined solution since the same system \eqref{eq:system} can be represented by different parameters $(\hat{A}, \hat{B}, \hat{C}, \hat{D})$ (for example choosing a different basis or  considering a bigger state dimension). 

In order to avoid these issues we use the behavioral setting \cite{Polderman1998, Willems2007, Markovskybook2006}, where the notion of controllability is a property of the system and not of the parameters we choose for its representation. In this framework, the system \eqref{eq:system} is viewed as the set of its trajectories. The controllability property is the possibility of concatenating any two trajectories, up to a delay of time. 
\begin{definition}
\label{def:contr}
Let $\mathcal{B}$  be a time invariant dynamical system, which is  a set of trajectories (vector valued functions of time).   $\mathcal{B}$ is said to be controllable if for all $w_1, w_2 \in \mathcal{B}$ there exists a $T > 0$ and a $w \in \mathcal{B}$ such that
\begin{equation}
\notag
w(t)  = \begin{cases}  w_1(t)  &  \text{for} \ t < 0 \\
   w_2(t)  & \text{for}  \ t \geq T
\end{cases}
\end{equation}
A system is uncontrollable if it is not controllable. 
\end{definition}

Any linear time invariant system admit a kernel representation \cite{Willems1986};
hence given the system $\mathcal{B}$, there is a polynomial matrix $R(z) = (P(z) \ Q(z) ) \in \R^{p \times(m+p)}$ such that
\begin{equation}
\label{eq:kernelrep}
\mathcal{B}(R) = \{ w \  | \  R_0 w + R_1 \sigma w + \cdots + R_l \sigma^l w = 0 \}
\end{equation}
where $\sigma$ is the shift operator (in the discrete case). The controllability property is related to the rank of the matrix polynomial $R(z)$, and in particular we have the following Lemma \cite{Polderman1998}:
\begin{lem}
\label{lemma:rankR}
The system  $\mathcal{B}$ is controllable (according to Definition \ref{def:contr})  if and only if the polynomial matrix 
$$
R(z) = R_0 + R_1 z + \cdots + R_l z^l
$$
is left prime, i.e $R(z)$ is full row rank for all $z$.
\end{lem}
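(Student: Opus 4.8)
The plan is to reduce the abstract concatenation condition of Definition \ref{def:contr} to a purely algebraic condition on $R(z)$ by bringing $R$ into Smith form. First I would observe that, without loss of generality, $R(z)$ may be assumed to have full row rank $p$ over the field of rational functions $\R(z)$: if it did not, unimodular row operations bring $R$ into a row echelon form with zero rows at the bottom, and these rows impose no constraint and may be deleted without changing $\mathcal{B}(R)$. With this normalization the Smith form yields unimodular matrices $U(z)\in\R^{p\times p}[z]$ and $V(z)\in\R^{(m+p)\times(m+p)}[z]$ (in the sense of Definition \ref{def:unimodular}), together with $D(z)=\mathrm{diag}(d_1(z),\dots,d_p(z))$ with $d_i\mid d_{i+1}$ and each $d_i\neq 0$, such that
$$
R(z)=U(z)\,\bigl[\,D(z)\ \ 0\,\bigr]\,V(z).
$$
By definition $R$ is left prime exactly when $R(z)$ retains full row rank for every $z\in\C$, and since $U,V$ are invertible at every $z$, this holds if and only if every invariant factor $d_i$ is a nonzero constant.

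Next I would translate the factorization into a decomposition of the behavior. Because $U(\sigma)$ is a unimodular operator, its action is a bijection with polynomial inverse, so $\mathcal{B}(R)=\ker R(\sigma)=\ker\bigl[\,D(\sigma)\ \ 0\,\bigr]V(\sigma)$. Introducing the new variable $\tilde w=V(\sigma)w$, which is again a bijective shift-invariant change of trajectory (since $V(\sigma)^{-1}$ is polynomial), the defining equations become $d_i(\sigma)\tilde w_i=0$ for $i=1,\dots,p$, while the last $m$ components of $\tilde w$ are completely free. Thus in the transformed coordinates $\mathcal{B}$ is a direct product of $m$ unconstrained scalar behaviors (which are trivially controllable) and $p$ scalar autonomous behaviors $d_i(\sigma)\tilde w_i=0$. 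A product behavior is controllable iff each factor is, and a scalar autonomous behavior $d_i(\sigma)w=0$ with $\deg d_i\geq 1$ is not controllable: its solution space is finite dimensional, and by the leading-coefficient recursion in the discrete case, or by analyticity in the continuous one, the only trajectory vanishing on a half-line is the zero trajectory, so a nonzero solution cannot be concatenated after the zero trajectory. Hence $\mathcal{B}$ is controllable if and only if every $d_i$ is a unit, which by the previous paragraph is precisely left primeness of $R$.

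The step I expect to be the main obstacle is justifying that controllability is \emph{invariant} under the change of variable $w\mapsto V(\sigma)w$. The difficulty is that the concatenation in Definition \ref{def:contr} is a local-in-time patching operation, whereas $V(\sigma)$ and its inverse are shift (or differential) operators coupling neighbouring time instants, so one cannot simply apply $V(\sigma)^{-1}$ to a patched trajectory and expect it to remain a valid concatenation. The resolution is to exploit that $V(\sigma)^{\pm 1}$ have finite order: a concatenation of $\tilde w_1,\tilde w_2$ achieved with delay $T$ pulls back to a concatenation of $w_1=V(\sigma)^{-1}\tilde w_1$ and $w_2=V(\sigma)^{-1}\tilde w_2$ with an enlarged delay $T'=T+\mathrm{ord}(V^{-1})$, since outside a window of length $\mathrm{ord}(V^{-1})$ the pulled-back trajectory agrees with the corresponding pullback of $\tilde w_1$ or of $\tilde w_2$. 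Making this windowing argument precise, and verifying the symmetric implication, is the technical heart of the proof; everything else reduces to the Smith-form bookkeeping above.
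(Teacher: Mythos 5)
The paper does not prove this lemma at all: it is quoted from the reference [Polderman--Willems], so there is no in-paper argument to compare against. Your proof is precisely the standard one from that source (Smith form, reduction to a product of free components and scalar autonomous behaviors $d_i(\sigma)\tilde w_i=0$, and the observation that a nontrivial autonomous behavior cannot be concatenated after the zero trajectory), and it is essentially correct. Two small points of bookkeeping deserve care. First, in the invariance step the window where the pulled-back trajectory agrees with neither $w_1$ nor $w_2$ generally extends to \emph{both} sides of the splitting instant (in discrete time $V(\sigma)^{-1}\tilde w(t)$ depends on $\tilde w(t),\dots,\tilde w(t+L)$, so agreement with $w_1$ holds only for $t<-L$, not $t<0$); this is repaired by first noting that, by shift-invariance of $\mathcal{B}$, Definition 5.1 is equivalent to the version in which the past-agreement instant $0$ is replaced by an arbitrary $t_0$, and then choosing $t_0$ inside the transformed coordinates so that the pullback matches $w_1$ exactly on $t<0$. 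Second, your claim that $\deg d_i\ge 1$ forces non-controllability uses the existence of a \emph{nonzero} solution of $d_i(\sigma)\tilde w_i=0$; in discrete time over $\mathbb{Z}$ the factor $d_i(z)=z^k$ has only the zero solution, so strictly one must either quotient out powers of $z$ (equivalently, take a minimal kernel representation) or work in continuous time, which is the implicit convention under which the lemma is stated in the cited reference. Neither issue affects the substance of the argument.
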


\paragraph{Distance to uncontrollability}
Alternatively to \eqref{eq:kernelrep}, a MIMO linear time invariant system can be represented by its input/output representation
$$
\mathcal{B}_{i/o} (P, Q) = \bigg\{ \begin{bmatrix} u \\ y \end{bmatrix}  \bigg| P(\sigma) y = Q(\sigma) u \bigg\}
$$
where we split the vector $w$ in \eqref{eq:kernelrep}  into two blocks (the inputs $u$ and the outputs $y$) and we partition the matrix $R=(Q \ \  -P)$ accordingly.
As a consequence of Lemma \ref{lemma:rankR} we have
\begin{corollary}\cite{kaashoekbook1989}
The presence of left common factors in $P$ and $Q$ leads to loss of controllability. 
\end{corollary}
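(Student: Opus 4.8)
The plan is to reduce the claim directly to Lemma \ref{lemma:rankR} via a factorization of the kernel-representation matrix $R(z) = (Q(z) \ \ -P(z))$. First I would record the purely algebraic consequence of a common left factor: if $C(z)$ is a nontrivial (i.e. non-unimodular) common left divisor of $P$ and $Q$, then by Definition \ref{def:gcld} we may write $P = C\bar{P}$ and $Q = C\bar{Q}$ for suitable matrix polynomials $\bar{P}, \bar{Q}$. Collecting the blocks yields
\begin{equation*}
R(z) = (Q(z) \ \ -P(z)) = C(z)\,(\bar{Q}(z) \ \ -\bar{P}(z)) = C(z)\,\bar{R}(z),
\end{equation*}
so that the $p \times (m+p)$ matrix $R$ factors through the square $p \times p$ matrix polynomial $C$.

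Next I would exploit that $C$ is non-unimodular. By Definition \ref{def:unimodular}, $\det C(z)$ is then not a nonzero constant; since $C$ is a genuine common factor of positive degree (and, under the monic normalization of Remark \ref{rmk:fullrank}, its leading coefficient is full rank), the polynomial $\det C(z)$ is non-constant. By the fundamental theorem of algebra it has a root $z_0 \in \C$, at which $C(z_0)$ is singular, i.e. $\operatorname{rank} C(z_0) < p$.

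Finally I would evaluate the factorization at $z_0$: since $R(z_0) = C(z_0)\bar{R}(z_0)$, the row space of $R(z_0)$ is contained in that of $C(z_0)$, whence
\begin{equation*}
\operatorname{rank} R(z_0) \le \operatorname{rank} C(z_0) < p.
\end{equation*}
Thus $R(z_0)$ fails to have full row rank, so $R(z)$ is not left prime. Reading Lemma \ref{lemma:rankR} in the contrapositive, the system $\mathcal{B}$ cannot be controllable, which is exactly the assertion.

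The only delicate point — and the step I would treat most carefully — is ensuring that the common factor is non-unimodular, since a unimodular left factor leaves $R(z)$ full rank at every $z$ and would not cost controllability. This is precisely why the statement must be understood for nontrivial common factors; the monic normalization adopted in Remark \ref{rmk:fullrank} guarantees that any common factor of positive degree has a full-rank leading coefficient and hence a non-constant determinant, removing the ambiguity. With that convention in place, the proof collapses to a single rank inequality applied at a root of $\det C$.
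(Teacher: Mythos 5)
Your proof is correct and follows exactly the route the paper intends: the paper states this corollary without proof (only a citation), presenting it as ``a consequence of Lemma \ref{lemma:rankR}'', and your argument --- factor $R = C\bar{R}$, locate a root $z_0$ of the non-constant polynomial $\det C$, and conclude $\operatorname{rank} R(z_0) \le \operatorname{rank} C(z_0) < p$ so that $R$ is not left prime --- is precisely that derivation, including the necessary care about non-unimodularity. One cosmetic slip: for $R(z_0) = C(z_0)\bar{R}(z_0)$ it is the \emph{column} space of $R(z_0)$ that lies in the column space of $C(z_0)$ (not the row space), but the rank-of-a-product inequality you invoke holds regardless, so the conclusion is unaffected.
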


Let $\mathcal{L}_{uc}$ be the set of uncontrollable linear time invariant systems with $m \geq 1$ inputs and $p \geq 1$ outputs,
$$
\mathcal{L}_{uc} = \{ \mathcal{B} \  | \  \mathcal{B} \ \text{uncontrollable} \   \text{MIMO} \  \text{LTI}  \  \text{system} \}
$$
and define the distance between two arbitrary systems by
$$
dist (\mathcal{B}(P, Q), \mathcal{B}(\bar{P}, \bar{Q})) = \| \begin{pmatrix}
P & Q
\end{pmatrix} - \begin{pmatrix}
\bar{P} & \bar{Q}
\end{pmatrix} \|_F,
$$
where the matrix  polynomials  are identified by a  vector whose entries are their coefficients\footnote{The parameters $P$ and $Q$ which identify the system are not unique. In order to have a well posed definition of distance we can assume $P$ to be monic. This involves however some loss of generality.}.
The problem of computing the distance to uncontrollability is the following:
\begin{problem}
\label{pr:duc}
Given a controllable system $\mathcal{B}(P, Q)$, find
$$
d(\mathcal{B}) = \min_{\mathcal{\bar{B}} \in \mathcal{L}_{uc}}  dist(\mathcal{B}, \mathcal{\bar{B}}).
$$
\end{problem}

In order to solve the non convex optimization Problem \ref{pr:duc}, we aim at perturbing the (left) coprime matrix polynomials  $P$ and $Q$ in a minimal way till they have a (left) common factor of degree $1$. The solution can be computed by the  algorithm proposed in Section \ref{sec:ode}.

A detailed description supported by some numerical experiments is presented in \cite{Fazzicdc}.

\section{Conclusions}
We generalized two algorithms for computing approximate common factors  from scalar to matrix polynomials. The first is a fast and computationally cheap algorithm which extract the informations about the common divisor from the resultant, while the second is a more accurate algorithm based on a two level iteration, which looks for the stationary points of a gradient system associated to a suitable functional. We showed how the performances are similar to the scalar case, and we described how to use the algorithms for computing the distance to uncontrollability for a Multi-Input Multi-Output linear time-invariant  system.

\paragraph*{\textbf{Acknowledgments}}
N. G. thanks the Italian
INdAM GNCS (Gruppo Nazionale di Calcolo Scientifico) for financial
support. I. M. received funding from the European Research Council (ERC) under 
the European Union's Seventh Framework Programme (FP7/2007--2013) / ERC 
Grant agreement number 258581 ''Structured low-rank approximation: 
Theory, algorithms, and applications"  and Fund for Scientific Research 
Vlaanderen (FWO) projects G028015N  ''Decoupling multivariate polynomials 
in nonlinear system identification" and  \\
G090117N  ''Block-oriented 
nonlinear identification using Volterra series"; and Fonds de la 
Recherche Scientifique (FNRS) -- FWO Vlaanderen under Excellence of 
Science (EOS) Project no 30468160  ''Structured low-rank matrix / tensor 
approximation: numerical optimization-based algorithms and applications". All the authors thank the anonymous reviewers  and the Principal Editor for their comments and suggestions, which led to an improvement of the paper.

\bibliographystyle{elsarticle-num}
\bibliography{bib}{}

\end{document}